\DeclareMathOperator{\lct}{lct}
\DeclareMathOperator{\Supp}{Supp}
\DeclareMathOperator{\vol}{vol}
 \numberwithin{equation}{subsection}
 \numberwithin{footnote}{subsection}
 \newtheorem{cor}[subsection]{Corollary}
 \newtheorem{thm}[subsection]{Theorem}
 \newtheorem{conj}[subsection]{Conjecture}
\newtheorem{prob}[subsection]{Problem}
 \newtheorem{quest}[subsection]{Question}
\theoremstyle{definition}
 \newtheorem{exa}[subsection]{Example}
 \newcommand{\N}{\mathbb N}
 \newcommand{\PP}{\mathbb P}
 \newcommand{\A}{\mathbb A}
 \newcommand{\Q}{\mathbb Q}
 \newcommand{\R}{\mathbb R}
 \newcommand{\Z}{\mathbb Z}
 \newcommand{\bir}{\dashrightarrow}
 \newcommand{\rddown}[1]{\left\lfloor{#1}\right\rfloor} 
\title{\large P\MakeLowercase{ositivity, singularities, and boundedness}}
\author{\large C\MakeLowercase{aucher} B\MakeLowercase{irkar}}
\date{\today}
\begin{document}
\maketitle

\text{\hspace{2.5cm} In honour of James M$^{\rm c}$Kernan's 60's birthday.}

\begin{abstract}
In this short note we will explore some recent connections between positivity, singularities, and boundedness in various contexts focusing on birational geometry.  
\end{abstract}

\tableofcontents


\section{Introduction}

We will work over an algebraically closed field $k$ of characteristic zero. We assume familiarity with basic of birational geometry, c.f. [\ref{BCHM}].

In algebraic geometry, arithmetic geometry, mathematical physics and other fields, it is often a fundamental problem to understand how positivity and singularities are related. Positivity is usually of a global nature while singularity is usually of a local nature. Their relation is sometimes straightforward but other times very subtle. On the other hand, boundedness of families of varieties or certain invariants is also closely related to both positivity and singularities. 

We start with some examples.

\begin{exa}[Multiplicity]
Assume $X\subset \PP^d$ is a hypersurface of degree $r$. What is the largest possible multiplicity $\mu_xX$ at a closed point $x\in X$? Pick a line $L$ through $x$ not contained in $X$. Then $\mu_xX\le X\cdot L=r$. So we get an upper bound for the multiplicity which is a local measure of singularities, in terms of the degree which is a global measure of positivity.
\end{exa}

\begin{exa}[Ramification]
Let $\pi\colon X\to \PP^1$ be a finite morphism from a smooth projective curve of genus $g$. Then the Riemann-Hurwitz formula says 
$$
K_X=\pi^*K_{\PP^1}+R
$$ 
where $R$ is the ramification divisor. So 
$$
2g-2=\deg K_X=-2\deg\pi+\deg R.
$$
This relates the genus of $g$ which is a global measure of positivity to degree of the ramification divisor which is a measure of singularities of the morphism $\pi$. 
\end{exa}

These are simple examples. Here are some more subtle examples. 

\begin{thm}[{Singularities of linear systems, [\ref{B-BAB}]}]\label{t-sing-lin-sys}
Let $d,r\in \mathbb{N}$ and $\epsilon\in \mathbb{R}^{>0}$. 
Then  there is $t\in \mathbb{R}^{>0}$ depending only on $d,r,\epsilon$ satisfying the following. 
Assume 
\begin{itemize}

\item  $(X,B)$ is a projective $\epsilon$-lc pair of dimension $d$, 

\item $A$ is a very ample divisor on $X$ with $A^d\le r$,

\item $A-B$ is pseudo-effective, and 

\item $M\ge 0$ is an $\R$-Cartier $\R$-divisor with $A-M$ pseudo-effective.
\end{itemize}
Then the lc threshold  
$$
\lct(X,B,M)\ge t.
$$
\end{thm}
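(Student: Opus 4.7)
I would proceed by induction on the dimension $d$. For $d=1$ the base case is immediate: $X$ is a smooth curve, $A-B$ and $A-M$ pseudo-effective force $\deg B\le r$ and $\deg M\le r$, and the $\epsilon$-lc hypothesis bounds the coefficients of $B$ by $1-\epsilon$, so a naive numerical bound of the shape $t=\epsilon/(r+1)$ suffices.

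For the inductive step I would argue by contradiction. Suppose the theorem fails, so there is a sequence of data $(X_i,B_i),A_i,M_i$ verifying the hypotheses with $t_i:=\lct(X_i,B_i,M_i)\to 0$. Replace $M_i$ by $D_i:=t_iM_i$; then $(X_i,B_i+D_i)$ is lc but not klt, while $(X_i,B_i)$ is still $\epsilon$-lc and $A_i-\tfrac{1}{t_i}D_i$ is pseudo-effective. In other words, $D_i$ is ``very small'' compared with $A_i$ yet still creates a non-klt place; the plan is to transport this contradiction to a lower-dimensional pair and appeal to induction.

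The first key step is to produce a minimal non-klt center $V_i\subsetneq X_i$ of bounded complexity. Since $A_i^d\le r$, general members of $|A_i|$ are at our disposal; combined with tie-breaking by small multiples of $A_i$ one can arrange that $(X_i,B_i+D_i+\Delta_i)$ has a unique non-klt place with center $V_i$ of dimension $e<d$, where $\Delta_i\le \eta A_i$ for some $\eta=\eta(d,r,\epsilon)$. The second key step is Kawamata subadjunction on $V_i$: this produces a generalized pair $(V_i,B_{V_i}+M_{V_i})$ on a birational model of $V_i$, polarized by the bounded divisor $A_i|_{V_i}$, with the ``size'' of $D_i$ encoded in the nef part $M_{V_i}$. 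Applying the inductive hypothesis in dimension $e$ to this pair should bound its lc threshold from below, which unwinds to a lower bound on $t_i$ contradicting $t_i\to 0$.

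The main obstacle will be reconciling the output of subadjunction with the hypotheses of the inductive statement: Kawamata subadjunction only guarantees the subadjoint pair is klt, not $\epsilon'$-lc for a uniform $\epsilon'>0$, and the coefficients of $B_{V_i}$ need not lie in a DCC set. Overcoming this appears to require invoking boundedness of complements for $(X_i,B_i)$ polarized by $A_i$: an $n$-complement with $n=n(d,r,\epsilon)$ replaces $B_i$ by a boundary with coefficients in $\tfrac{1}{n}\Z$, after which subadjunction yields controlled coefficients on $V_i$. Supplying such complements -- itself a central theorem of [\ref{B-BAB}] -- is the deepest input, so the entire induction must be run in parallel with the complements theorem in matching dimension.
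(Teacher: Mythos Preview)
The paper does not supply its own proof of this theorem: it is quoted verbatim from [\ref{B-BAB}] as background, so there is no in-paper argument to compare your proposal against. That said, a few remarks on the proposal itself are in order.

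Your outline captures the broad architecture of the argument in [\ref{B-BAB}] --- induction on dimension, passage to a minimal non-klt centre, adjunction, and the essential interplay with bounded complements --- and you correctly flag the central difficulty, namely that subadjunction alone does not give a uniform $\epsilon'$-lc pair on the centre. However, two steps in your sketch are genuine gaps rather than details to be filled in later. First, the ``unwinding'' step is not explained: you assert that a lower bound on the lc threshold of the subadjoint pair on $V_i$ yields a lower bound on $t_i$, but the quantity $t_i$ lives on $X_i$ and measures how singular $D_i$ is there, whereas after adjunction $D_i$ has been absorbed into the moduli part $M_{V_i}$; making this implication precise requires controlling the log discrepancy of the original non-klt place through the adjunction formula, which is a substantial argument in its own right. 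Second, the output of Kawamata subadjunction is a \emph{generalised} pair $(V_i,B_{V_i}+M_{V_i})$, while your inductive hypothesis is stated only for ordinary pairs $(X,B)$; you would need either a generalised-pair version of the theorem or a mechanism to trade the nef part for an honest boundary, and neither is supplied.

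In [\ref{B-BAB}] these issues are handled by running the theorem simultaneously with several companion statements (effective birationality, boundedness of complements, bounds on lc thresholds of anticanonical systems), and the reduction to a lower-dimensional centre goes through a carefully constructed plt blow-up rather than a direct appeal to subadjunction. So while your proposal points in the right direction, it understates the work required; as written it would not close.
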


Here $A^d\le r$ means that $X$ belongs to a bounded family of varieties. The condition $A-M$ pseudo-effective controls the global positivity of $M$ while the conclusion of the theorem controls the singularities of $M$.

\begin{thm}[{BAB, [\ref{B-BAB}]}]\label{t-BAB}
Let $d\in \mathbb{N}$ and $\epsilon \in \mathbb{R}^{>0}$. Then the projective 
varieties $X$ such that  
\begin{itemize}

\item $(X,B)$ is $\epsilon$-lc of dimension $d$ for some boundary $B$, and 

\item  $-(K_X+B)$ is nef and big,
\end{itemize}
form a bounded family. 
\end{thm}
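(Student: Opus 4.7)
The overall strategy is to bootstrap from Theorem~\ref{t-sing-lin-sys}: first I would establish a bound on the anti-canonical volume, then prove effective birationality of $|-m(K_X+B)|$ for bounded $m$, and finally invoke a log-birational boundedness criterion of Matsusaka–Koll\'ar type to conclude. After perturbing $B$ slightly and running an MMP on $-(K_X+B)$, I may assume that $(X,B)$ is $\epsilon/2$-lc with $-(K_X+B)$ ample. By absorbing a further loss of $\epsilon$, I may even pass to a single pair $(X,0)$ with $-K_X$ ample and $X$ still with controlled singularities.

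The first key step is the volume bound: $(-(K_X+B))^d \le V(d,\epsilon)$. If not, for any large $V_0$ one finds $(X,B)$ as above with volume exceeding $V_0$. A Riemann--Roch / non-vanishing argument then produces, at a general closed point $x\in X$, an effective $\R$-divisor $M \sim_{\R} -(K_X+B)$ with $\mult_x M$ arbitrarily large, hence a small rational $c$ such that $(X,B+cM)$ is not klt at $x$, with $c$ tending to $0$ as $V_0 \to \infty$. Choosing an auxiliary very ample $A$ as a fixed bounded multiple of $-(K_X+B)$ so that both $A-B$ and $A-M$ are pseudo-effective and $A^d$ is bounded in terms of $V_0$, Theorem~\ref{t-sing-lin-sys} forces $\lct(X,B,M) \ge t(d,\epsilon,r)$ uniformly, which contradicts $c \to 0$ once $V_0$ is large enough.

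Given the volume bound, the second step is effective birationality: $|-m(K_X+B)|$ defines a birational map for some $m=m(d,\epsilon)$. The plan is the standard approach via non-klt centers: at a general pair of points $x_1,x_2$ one uses the volume bound to produce $\R$-divisors in $|-m(K_X+B)|_{\R}$ creating non-klt loci at both points, then applies tie-breaking and cuts down to a minimal non-klt center isolated at $x_1$, with Theorem~\ref{t-sing-lin-sys} providing the necessary control on log canonical thresholds so that the construction stays in a bounded range. Combining the volume bound with effective birationality yields a very ample $H$ on $X$ with $H^d$ bounded, and standard Hilbert scheme / Matsusaka arguments then place $X$ in a bounded family.

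The main obstacle is Step~1, the volume bound. Converting ``large volume'' into an effective divisor with uniformly large multiplicity at a moving general point, while simultaneously arranging the auxiliary $A$ with $A^d$ bounded so that Theorem~\ref{t-sing-lin-sys} is applicable, is the technical heart of the argument. It forces one to work in families: lifting the construction to a family over a parameter base, spreading $M$ out horizontally, and then specializing; the $\epsilon$-lc hypothesis must be preserved under this process, and the interaction between the MMP reductions and the asymptotic Riemann--Roch estimates is delicate.
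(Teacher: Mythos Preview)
First, note that the paper does not prove Theorem~\ref{t-BAB}; it is quoted from [\ref{B-BAB}] with only the remark that its proof ``relies crucially on \ldots\ Theorem~\ref{t-sing-lin-sys} \ldots\ and other subtle connections between volumes and singularities of divisors.'' So there is no in-paper argument to compare against directly.

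That said, your outline contains a genuine circularity in Step~1. Theorem~\ref{t-sing-lin-sys} requires a very ample $A$ with $A^d \le r$ for a \emph{fixed} $r$: this hypothesis is exactly that $X$ already lies in a bounded family. You propose to take $A$ as a bounded multiple of $-(K_X+B)$, but then $A^d$ is a fixed multiple of $\vol(-(K_X+B))$, which is precisely the quantity you are trying to bound. You write that ``$A^d$ is bounded in terms of $V_0$'', but in your contradiction argument $V_0 \to \infty$, so the threshold $t$ coming out of Theorem~\ref{t-sing-lin-sys} depends on $V_0$ and may well tend to $0$; no contradiction follows. There is no very ample divisor on an arbitrary $\epsilon$-lc weak Fano with $A^d$ bounded a priori---that is essentially what BAB asserts.

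In the actual proof in [\ref{B-BAB}] this is resolved by a substantial induction on $d$ in which BAB, effective birationality, the anti-canonical volume bound, and Theorem~\ref{t-sing-lin-sys} are established \emph{together}, each in dimension $d$ using the others (and several further auxiliary statements) in dimension $\le d-1$. In particular Theorem~\ref{t-sing-lin-sys} cannot be treated as a black-box input to BAB in the same dimension. Your sketch names the right ingredients but misses the inductive architecture that breaks the circularity. (A smaller point: the reduction to $(X,0)$ with $-K_X$ ample is not valid; dropping $B$ preserves the $\epsilon$-lc condition, but $-K_X = -(K_X+B)+B$ is only big, not ample or even nef in general.)
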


This theorem also relates singularities and positivity as it is evident from the statement. 
Examining the proof closely shows that its proof relies crucially on the connection between singularities and positivity including Theorem \ref{t-sing-lin-sys} on singularities of linear systems and other subtle connections between volumes and singularities of divisors.

\subsection{Singularities on fibrations}

In algebraic geometry and other fields it is often very important to understand singularities on ``fibrations". Here by fibration we simply mean a surjective morphism from a variety to another variety, usually assumed to be projective and sometimes with connected fibres. 

\begin{prob}
Suppose $(X,B)$ is a pair, $A$ is a divisor on $X$, and $f\colon X\to Z$ is a surjective morphism. Relate the singularities of $(X,B)$, of the fibres of $f$, and of $Z$, taking into account positivity of $A$. 
\end{prob}

This is of course stated vaguely because it is in a very general context but it can serve as a good guiding problem. Considering various special cases, we can appreciate the importance and difficulty of this problem. For example, we can view Theorem \ref{t-sing-lin-sys} as a case of this problem (indeed, it is enough to consider only those $M\sim_\R A$, so the theorem is about singularities of the real linear system $|A|_\R$).

In what follows we will explore more cases of the problem. 

\subsection{Singularities on Fano type fibrations}

Fano varieties and more generally Fano fibrations are extremely important in birational geometry. One can view the minimal model program (MMP) as looking for Fano fibrations on algebraic varieties. Here, by Fano fibration we mean a contraction $X\to Z$ where $-K_X$ is ample over $Z$ ($f$ being a contraction means $f$ is projective and $f_*\mathcal{O}_X=\mathcal{O}_Z$). In particular, we also include the cases when $X\to Z$ is birational or even identity. So we can view many ingredients of the MMP as Fano fibrations, e.g. Mori fibre spaces, divisorial and flipping contractions, singularities. In the context of Fano fibrations, positivity is measured by the anti-canonical divisor $-K_X$.

There are three main types of Fano fibrations. First, we have the global case when $\dim Z=0$. Some of the main problems in this case were settled in [\ref{B-compl}][\ref{B-BAB}]. Second, we have the genuine fibre case when $\dim X>\dim Z>0$. Some of the main problems in this case were established in [\ref{B-Fano-fib}][\ref{B-FT-fib}][\ref{B16}]. Third, we have the birational (that is local) case when $\dim X=\dim Z$. There are still many problems in this direction some of which will be mentioned below.

\begin{thm}[{Singularities on Fano type fibrations, [\ref{B-Fano-fib}]}]
	\label{t-mc-sh-conj}
	Let $d\in \mathbb{N}$ and let $\epsilon \in \mathbb{R}^{>0}$. Then there is $\delta \in \mathbb{R}^{>0}$ depending only on $d,\epsilon$ satisfying the following. 
	Let $(X,B)$ be a pair where $B$ is a $\Q$-boundary and let \(f\colon X\to Z\) be a contraction 
	such that 
\begin{itemize}
\item $(X,B)$ is $\epsilon$-lc and $\dim X-\dim Z=d$,
\item $K_X+B\sim_\Q 0/Z$, and 
\item $-K_X$ is big over $Z$ (i.e. $X$ is of Fano type over $Z$).
\end{itemize}	
Then the generalised pair $(Z,B_Z+M_Z)$ induced by the canonical bundle formula 
$$
K_X+B\sim_\Q f^*(K_Z+B_Z+M_Z)
$$ 
is generalised $\delta$-lc.
\end{thm}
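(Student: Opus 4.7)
The plan is to treat the discriminant part $B_Z$ and the moduli part $M_Z$ of the canonical bundle formula separately, and to reduce both estimates to the geometry of the general fibre of $f$. The starting observation is that a general fibre $F$ of $f$ is a $d$-dimensional $\epsilon$-lc Fano variety (with boundary $B|_F$ coming from adjunction to $F$, using $K_X+B\sim_\Q 0/Z$ together with $-K_X$ big over $Z$). Hence by Theorem \ref{t-BAB} such fibres $F$ form a bounded family, and this uniform input powers everything else.

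For the discriminant part $B_Z$: by definition, for a prime divisor $D\subset Z$, the coefficient $b_D$ equals $1-t_D$ where $t_D=\lct(X,B;f^*D)$ is computed over the generic point of $D$. To produce a uniform lower bound $t_D\ge \delta_1$, I would cut $Z$ by general hyperplanes to reduce to $\dim Z=1$, and then work over the generic point. Boundedness of $F$ supplies a uniform very ample divisor $A$ with bounded top self-intersection, and combined with the fact that $-K_X$ is big over $Z$ (which controls $B$ and the pullback $f^*D$ up to $\R$-linear equivalence in the relative setting), this puts us in a setting where Theorem \ref{t-sing-lin-sys}, applied fibrewise to $(F,B|_F)$ and a divisor related to $f^*D$, produces a uniform $\delta_1$ with $t_D\ge \delta_1$. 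In particular the ordinary pair $(Z,B_Z)$ is $\delta_1$-lc.

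For the moduli part $M_Z$: the boundedness of the general fibre, together with the theory of lc-trivial fibrations (Kawamata--Ambro--Fujino), implies that after replacing $Z$ by a suitable birational model, $M_Z$ descends to a nef $\Q$-Cartier divisor with bounded Cartier index. This realises $M_Z$ as a b-nef b-divisor with uniformly bounded denominators, so $(Z,B_Z+M_Z)$ is a genuine generalised pair in a uniform sense. Combining this with the estimate on $B_Z$, the contribution of the descended nef representative to generalised log discrepancies at any prime $E$ over $Z$ is controlled by the bounded Cartier index, and one obtains a uniform $\delta\le \delta_1$ with $(Z,B_Z+M_Z)$ generalised $\delta$-lc.

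The main obstacle is the moduli part. Unlike $B_Z$, $M_Z$ is not effective, and its Cartier index depends on how fibres vary in moduli; reducing it to a single uniform constant is the technical core and is where boundedness of Fano fibres (Theorem \ref{t-BAB}) and effective adjunction for lc-trivial fibrations are indispensable. The discriminant estimate, by contrast, is a direct application of Theorem \ref{t-sing-lin-sys} once boundedness of $F$ is secured. Once both uniform estimates are available, assembling $\delta$ is a routine generalised-pair manipulation.
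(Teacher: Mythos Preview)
Your proposal has a genuine gap in the discriminant step, and it is precisely the place where the real difficulty of the theorem lies. You want to bound $t_D=\lct(X,B;f^*D)$ by invoking Theorem~\ref{t-sing-lin-sys} ``fibrewise'' on the general fibre $F$, using that $F$ is bounded by Theorem~\ref{t-BAB}. But $f^*D$ is a \emph{vertical} divisor: it is supported on a single special fibre and is disjoint from the general fibre $F$. So restricting to $F$ gives no information whatsoever about the singularities of $(X,B+t f^*D)$ near that special fibre. Theorem~\ref{t-sing-lin-sys} needs a very ample $A$ on a bounded ambient variety with $A-M$ pseudo-effective; here only the generic fibre is bounded, and there is no such $A$ on the total space controlling the special fibre $f^*D$. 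Passing from boundedness of the generic fibre to control of special fibres is exactly the hard content of the theorem, and it cannot be short-circuited by a fibrewise argument. (Your treatment of $M_Z$ is also off: the generalised log discrepancy at a divisor $E$ over $Z$ equals $1$ minus the coefficient of $E$ in the discriminant computed on a high model, so the whole statement is a uniform bound on discriminant coefficients on \emph{all} birational models of $Z$; a separate ``bounded Cartier index'' argument for $M_Z$ neither helps nor is needed.)

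For comparison, the paper's route is entirely different from yours. Over a fixed point $z\in Z$ one first produces a bounded lc $n$-complement $K_X+\Lambda$ (complement theory), then uses BAB on the general fibres to build a relatively bounded birational model $(Y,D)\to Z$, then uses moduli of nodal curves to get a relatively bounded \emph{toroidal} model $(Y',D')\to(Z',E')$, then passes to a local \emph{toric} model near the centre of an lc place of $(X,\Lambda)$, and finally translates the desired multiplicity bound into the purely toric statement of Theorem~\ref{t-toric-problem}. In other words, BAB enters only to set up a bounded family in which one can run toroidal/toric geometry; the actual bound on $t_D$ comes from the toric problem, not from Theorem~\ref{t-sing-lin-sys}.
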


This was conjectured by Shokurov. And here are two quick consequences. 

\begin{cor}
	\label{cor-mckernan-conj}
	Let $d\in \mathbb{N}$ and let $\epsilon \in \mathbb{R}^{>0}$. Then there is $\delta \in \mathbb{R}^{>0}$ depending only on $d,\epsilon$ satisfying the following. 
	Let \(f\colon X\to Z\) be a Fano fibration where $X$ is $\epsilon$-lc, $\dim X-\dim Z=d$, and $K_Z$ is $\Q$-Cartier. Then $Z$ is $\delta$-lc.
\end{cor}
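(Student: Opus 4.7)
The plan is to reduce the corollary to Theorem~\ref{t-mc-sh-conj} by producing an appropriate boundary on $X$ and then translating the generalised $\delta$-lc property on $Z$ to ordinary $\delta$-lc-ness of $Z$.

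The first and most technical step is to construct a $\Q$-boundary $B \ge 0$ on $X$ with $B \sim_\Q -K_X/Z$ such that $(X,B)$ is $\epsilon'$-lc for some $\epsilon' = \epsilon'(d,\epsilon) > 0$. Since $-K_X$ is $f$-ample and the general fibre $F$ is an $\epsilon$-lc Fano variety of dimension $d$, Theorem~\ref{t-BAB} gives that $F$ lies in a bounded family, and combined with the theory of bounded complements from [\ref{B-compl}] and singularities-of-linear-systems results like Theorem~\ref{t-sing-lin-sys}, one can take a bounded $n = n(d,\epsilon)$ and produce $B$ as a suitable generic $\tfrac{1}{n}$-multiple of an element of the relative system $|{-nK_X}/Z|$ with the singularities of $(X,B)$ controlled uniformly in $d$ and $\epsilon$. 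This uniform control is the main obstacle.

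Once $B$ is constructed, apply Theorem~\ref{t-mc-sh-conj} to $(X,B)$ and $f$: the hypotheses hold since $K_X+B \sim_\Q 0/Z$, $(X,B)$ is $\epsilon'$-lc, and $-K_X$ is big (in fact ample) over $Z$. We conclude that the generalised pair $(Z, B_Z + M_Z)$ from the canonical bundle formula
$$
K_X + B \;\sim_\Q\; f^*(K_Z + B_Z + M_Z)
$$
is generalised $\delta'$-lc for some $\delta' = \delta'(d,\epsilon) > 0$.

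The final step is to transfer this to $Z$ being $\delta'$-lc. Since $K_Z$ is $\Q$-Cartier by hypothesis, so is $B_Z + M_Z$. On a sufficiently high birational model $\pi \colon Z' \to Z$ where the moduli b-divisor descends to a nef $\Q$-Cartier $M_{Z'}$, write $K_{Z'}+B_{Z'}+M_{Z'} = \pi^*(K_Z + B_Z + M_Z)$. For any prime divisor $E$ exceptional over $Z$ and appearing on $Z'$, the $\Q$-Cartierness of $B_Z + M_Z$ gives $\mult_E \pi^*(B_Z + M_Z) = 0$, hence
$$
a(E,Z,0) \;=\; 1 - \mult_E B_{Z'} - \mult_E M_{Z'}.
$$
The generalised $\delta'$-lc property yields $\mult_E B_{Z'} \le 1 - \delta'$, and the negativity lemma, applied to the $\pi$-nef $\pi$-exceptional divisor $M_{Z'} - \pi^* M_Z$ on an intermediate model where $M$ descends to a $\Q$-Cartier divisor, forces $\mult_E M_{Z'} \le 0$. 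Combining gives $a(E,Z,0) \ge \delta'$. For $E$ a prime divisor on $Z$ itself, $a(E,Z,0)=1 \ge \delta'$ trivially. Hence $Z$ is $\delta'$-lc, and we may take $\delta = \delta'$.
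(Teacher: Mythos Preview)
The paper does not give an explicit argument for this corollary; it simply lists it as one of the ``quick consequences'' of Theorem~\ref{t-mc-sh-conj}. Your three--step outline (produce a boundary $B$ with $K_X+B\sim_\Q 0/Z$, apply the theorem, and then pass from generalised $\delta$-lc to ordinary $\delta$-lc using that $K_Z$ is $\Q$-Cartier) is exactly the intended deduction. Two points, however, deserve correction.

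\textbf{Step 1 is much easier than you suggest.} Since $X$ is klt and $-K_X$ is $f$-ample, the relative base-point-free theorem makes $|{-}mK_X|$ free over $Z$ for $m\gg 0$. Working locally over an affine neighbourhood of a given $z\in Z$, a general member $D$ pulls back on a log resolution to a smooth divisor transversal to the exceptional locus, so $(X,\tfrac{1}{m}D)$ is $(\epsilon-\tfrac{1}{m})$-lc; taking $m\ge 2/\epsilon$ gives $\epsilon'=\epsilon/2$. No appeal to BAB or bounded complements is needed, and there is no ``main obstacle'' here.

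\textbf{Step 3 contains a genuine error.} The assertion that $\Q$-Cartierness of $B_Z+M_Z$ forces $\mult_E\pi^*(B_Z+M_Z)=0$ for exceptional $E$ is false: $\Q$-Cartier only means the pullback is defined, not that it vanishes along exceptional divisors. Likewise, your negativity argument only yields $M_{Z'}\le \pi^*M_Z$ (when the latter makes sense), not $\mult_E M_{Z'}\le 0$. The correct computation is
\[
a(E,Z,0)=\bigl(1-\mult_E B_{Z'}\bigr)+\mult_E\bigl(\pi^*(B_Z+M_Z)-M_{Z'}\bigr),
\]
so one must show $N:=\pi^*(B_Z+M_Z)-M_{Z'}\ge 0$. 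This follows directly from the negativity lemma in its standard form: $-N\equiv_\pi M_{Z'}$ is $\pi$-nef and $\pi_*N=B_Z\ge 0$, hence $N\ge 0$. With this fix, the conclusion $a(E,Z,0)\ge\delta'$ follows and the argument is complete.
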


This was conjectured by M$^{\rm c}$Kernan.

\begin{cor}
	\label{cor-mult-fib}
	Let $d\in \mathbb{N}$ and let $\epsilon \in \mathbb{R}^{>0}$. Then there is $l\in \mathbb{N}$ depending only on $d,\epsilon$ satisfying the following. 
	Let \(f\colon X\to Z\) be a Fano fibration where $X$ is $\epsilon$-lc of dimension $d$ and $Z$ is a  curve.
	Then for each $z\in Z$, each coefficient of $f^*z$ is $\le l$.
\end{cor}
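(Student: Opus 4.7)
The plan is to reduce to Theorem \ref{t-mc-sh-conj} by manufacturing an auxiliary boundary on $X$ and then reading off the multiplicity bound from the discriminant part of the canonical bundle formula.

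First I would produce the pair. Since $f$ is a Fano fibration, $-K_X$ is $f$-ample, so for $n\gg 0$ the relative linear system $|{-nK_X}/Z|$ is base point free over $Z$, and a standard Bertini-type argument gives a general member $D$ for which $(X,B):=(X,\tfrac{1}{n}D)$ is $\tfrac{\epsilon}{2}$-lc. Then $B$ is a $\Q$-boundary with $K_X+B\sim_{\Q}0/Z$, and $-K_X$ is big over $Z$; hence the hypotheses of Theorem \ref{t-mc-sh-conj} hold with relative dimension $d-1$ and parameter $\epsilon/2$. Applying it yields $\delta\in\R^{>0}$, depending only on $d,\epsilon$, such that the generalised pair $(Z,B_Z+M_Z)$ defined by the canonical bundle formula $K_X+B\sim_{\Q}f^*(K_Z+B_Z+M_Z)$ is generalised $\delta$-lc. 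Since $Z$ is a curve, this forces the coefficient $b_z$ of each point $z\in Z$ in $B_Z$ to satisfy $b_z\le 1-\delta$.

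Next I would translate this into the desired bound on fibre multiplicities. By the definition of the discriminant, $b_z=1-t_z$, where $t_z$ is the lc threshold of $f^*z$ with respect to $(X,B)$ over $z$, so $t_z\ge\delta$. Writing $f^*z=\sum m_iF_i$ with $F_i$ prime and $b_i$ the coefficient of $F_i$ in $B$, the requirement that the coefficient of $F_i$ in $B+t_zf^*z$ not exceed $1$ gives $t_z\le(1-b_i)/m_i$; combining with $b_i\ge 0$ yields
$$
m_i\le\frac{1-b_i}{t_z}\le\frac{1}{\delta}.
$$
So $l=\lceil 1/\delta\rceil$ works.

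The hard part will be the first step: securing $B$ with $(X,B)$ uniformly $\epsilon'$-lc for some $\epsilon'$ depending only on $\epsilon$, not on the particular $X$. A Bertini argument applied to the base point free system $|{-nK_X}/Z|$ for $n\gg 0$ handles this, since choosing $n$ so that $1/n$ is small compared to $\epsilon$ makes the correction to log discrepancies negligible. After that, the remainder is essentially formal: Theorem \ref{t-mc-sh-conj} bounds $t_z$ from below by $\delta$, and by the very definition of the lc threshold this bounds the fibre multiplicities from above.
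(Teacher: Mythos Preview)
Your argument is correct and is precisely the intended derivation: the paper states this as a ``quick consequence'' of Theorem~\ref{t-mc-sh-conj} without writing out a proof, and what you have done---manufacture a klt boundary $B$ with $K_X+B\sim_{\Q}0/Z$, apply Theorem~\ref{t-mc-sh-conj}, then read off the bound from the discriminant via $t_z\ge\delta$---is exactly that consequence. One small remark: the step you flag as the ``hard part'' is actually routine, because $n$ is allowed to depend on $X$; all you need is that the resulting pair is $\epsilon'$-lc for some $\epsilon'$ (say $\epsilon/2$, after assuming $\epsilon<1$ without loss) that depends only on $\epsilon$, and a standard Bertini argument for the free system $|{-nK_X}/Z|$ with $n\gg 0$ gives this immediately.
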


These results are used to prove the following result which was conjectured by M$^{\rm c}$Kernan and Prokhorov (in a stronger form).

\begin{thm}[{Boundedness of rationally-connected varieties with nef $-K$, [\ref{B-Fano-fib}]}]
	\label{t-mc-pro-conj}
	Let $d\in \mathbb{N}$ and let $\epsilon \in \mathbb{R}^{>0}$. Consider projective varieties $X$  
    where 
	\begin{itemize}
	\item $(X,B)$ is $\epsilon$-lc of dimension $d$ for some $B$,
	\item $-(K_X+B)$ is nef, and 
	\item $X$ is rationally connected.
	\end{itemize}
	Then such $X$ form a bounded family up to isomorphism in codimension one.
\end{thm}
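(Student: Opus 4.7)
The plan is to argue by induction on the dimension $d$, reducing $X$ to a Mori fibre space via an MMP and then applying Theorem \ref{t-mc-sh-conj} to control singularities on the base. If $-(K_X+B)$ is big then $X$ is of Fano type, and boundedness follows directly from Theorem \ref{t-BAB} (BAB). Assume therefore that $-(K_X+B)$ is not big. Since $X$ is rationally connected, $K_X$ is not pseudo-effective, so after replacing $(X,B)$ by a $\Q$-factorial dlt model (which is isomorphic to $X$ in codimension one) and running a suitable MMP, I would produce a birational model $X'$ of $X$, isomorphic in codimension one, that admits a Mori fibre space structure $f\colon X'\to Z$ with $0<\dim Z<d$. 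With care one can arrange that $(X',B')$ remains $\epsilon$-lc and $-(K_{X'}+B')$ remains nef; the base $Z$ is rationally connected because it is dominated by $X'$.

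Applying the canonical bundle formula to $f$,
$$
K_{X'}+B'\sim_\Q f^*(K_Z+B_Z+M_Z),
$$
and since $f$ is a contraction with $-(K_{X'}+B')$ nef, the class $-(K_Z+B_Z+M_Z)$ is nef. Theorem \ref{t-mc-sh-conj} then gives that the generalised pair $(Z,B_Z+M_Z)$ is generalised $\delta$-lc for some $\delta>0$ depending only on $d$ and $\epsilon$.

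To close the induction I would strengthen the statement to generalised pairs: projective generalised $\delta$-lc pairs of dimension $<d$ with nef anti-log-canonical class and rationally connected underlying variety form a bounded family in codimension one. Applied to $(Z,B_Z+M_Z)$ this yields boundedness of $Z$. With $Z$ in a bounded family, the remaining task is to bound $X'$ over $Z$: combining the fibrewise bound from Theorem \ref{t-BAB} applied to the (generic) Fano fibre with a very ample pullback from $Z$, one constructs a relatively very ample divisor on $X'/Z$ of uniformly bounded degree, producing a bounded embedding of $X'$ into projective space.

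The main obstacle is twofold. First, the induction demands a generalised-pair version of the whole theorem, and one must verify that each ingredient (the MMP reduction, the canonical bundle formula, and the fibrewise BAB) extends cleanly to this setting so that the inductive loop closes. Second, passing from boundedness of $Z$ to boundedness of $X'$ requires uniform control of both the Cartier index and the degree of the relatively very ample divisor one builds on $X'/Z$; this is where the positivity/singularity interplay underlying Theorem \ref{t-sing-lin-sys} becomes essential, as it provides the uniform bounds on singularities of members of linear systems that are needed to pass from bounded base plus bounded fibre to bounded total space.
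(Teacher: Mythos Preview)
The paper itself does not prove this theorem; it is a survey that merely states the result with a citation to [\ref{B-Fano-fib}], noting only that Theorem~\ref{t-mc-sh-conj} and its corollaries ``are used to prove'' it. So there is no proof in the paper to compare against beyond that one-line hint, and your broad strategy---reduce to a Mori fibre space, apply Theorem~\ref{t-mc-sh-conj} to control the base, and induct---is indeed the intended shape of the argument.

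That said, your MMP step contains a genuine gap. You write that after a $\Q$-factorialisation you ``run a suitable MMP'' and ``produce a birational model $X'$ of $X$, isomorphic in codimension one, that admits a Mori fibre space structure,'' and that ``with care one can arrange that $(X',B')$ remains $\epsilon$-lc and $-(K_{X'}+B')$ remains nef.'' None of this is automatic. An MMP on $K_X$ (which is what terminates in a Mori fibre space, since $K_X$ is not pseudo-effective) will in general perform divisorial contractions, so the output is \emph{not} isomorphic to $X$ in codimension one; and an MMP on $K_X$ alone neither preserves the $\epsilon$-lc property of $(X,B)$ nor the nefness of $-(K_X+B)$. The usual remedy is to regard $N:=-(K_X+B)$ as the nef part of a generalised pair, so that $K_X+B+N\equiv 0$ and every step of any MMP is crepant for $(X,B+N)$; this salvages the $\epsilon$-lc and nefness conditions, but it does \emph{not} salvage the isomorphism-in-codimension-one claim. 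One still has to argue separately that once the Mori fibre space output is bounded, one can go back and bound $X$ itself---for instance by extracting the divisors contracted by the MMP, whose log discrepancies with respect to the numerically trivial generalised pair are at most $1$, and invoking boundedness of such crepant-type models. You gesture at this in your obstacle paragraph (``passing from boundedness of $Z$ to boundedness of $X'$''), but the more serious missing link is passing from boundedness of $X'$ back to boundedness of $X$, and your text asserts this away rather than confronting it.
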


Here is a very rough plan of the proof of Theorem \ref{t-mc-sh-conj} which overlaps with the proof of Theorem \ref{t-bnd-mult-lc-places-fib-main} below. 
\vspace{0.2cm}
\begin{itemize}
\item Step 1: Fix a closed point $z\in Z$. Apply \emph{complement theory}, to get a lc $n$-complements $K_X+\Lambda$ over $z$ where $n$ is bounded.
\vspace{0.2cm}

\item Step 2: Apply \emph{boundedness of Fano's} (BAB) to the general fibres and get a birational model $Y\to Z$ so that if $D$ is the birational transform of the horizontal part of $\Lambda$, then $(Y,D)$ is relatively bounded over $Z$.
\vspace{0.2cm}

\item Step 3: Apply the techniques of families of \emph{nodal curves} to get a relatively bounded toroidal model $(Y',D')\to (Z',E')$ of $(Y,D)\to Z$. 
\vspace{0.2cm}

\item Step 4: Find a local \emph{toric} model $(Y'',D'')$ of $(Y',D')$ near $y'$ where $y'$ is the centre of an appropriate lc place of $(X,\Lambda)$.
\vspace{0.2cm} 

\item Step 5: Translate the conclusion of the theorem into a toric problem using the toric model $(Y'',D'')$. This toric problem happens to be Theorem \ref{t-toric-problem} below. 
\vspace{0.2cm}

\item Step 6: Solve the toric problem.
\vspace{0.2cm}
\end{itemize}

\bigskip

\subsection{Singularities on Calabi-Yau fibrations}

One can ask whether Theorem \ref{t-mc-sh-conj} holds if we remove the Fano type property, that is, removing $-K_X$ being big. The answer is no. Indeed, elliptic fibrations on surfaces already provide a counter-example because it is well-known that there is no upper bound on the multiplicity of fibres of such fibrations. But that is not the end of the story. In order to control singularities, we need to control positivity by introducing a ``polarising" divisor.  

\begin{thm}[{Singularities on polarised Calabi-Yau fibrations, [\ref{B-Fano-fib}]}]\label{t-cb-sing-usual-fib}
	Let $d,v\in \mathbb{N}$ and let $\epsilon \in \mathbb{R}^{>0}$.
 Then there is $\delta\in \R^{>0}$ depending only on $d,v,\epsilon$ satisfying the following. 
 Assume that $(X,B)$ is a pair where $B$ is a $\Q$-boundary, and that $f\colon X\to Z$ is a contraction such that
\begin{itemize}
\item $(X,B)$ is $\epsilon$-lc of dimension $d$,

\item $K_X+B\sim_\Q 0/Z$, and  

\item there is an integral divisor $A$ on $X$ which is big over $Z$ with $\vol(A|_F)\le v$ 
for the general fibres $F$ of $f$.
\end{itemize}
Then the generalised pair $(Z,B_Z+M_Z)$ induced by the canonical bundle formula is generalised $\delta$-lc.
\end{thm}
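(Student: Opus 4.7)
The plan is to mirror the six-step strategy described for Theorem \ref{t-mc-sh-conj}, with the polarising divisor $A$ taking the role that $-K_X$ (big over $Z$) played in the Fano-type setting. Fix a closed point $z\in Z$. We must bound below, by some $\delta>0$ depending only on $d,v,\epsilon$, the log discrepancy with respect to $(Z,B_Z+M_Z)$ of every prime divisor $E$ over $Z$ with centre $z$. Via the canonical bundle formula this is equivalent to controlling, in terms of $d,v,\epsilon$, the behaviour of lc places over $z$ of a suitable bounded lc complement of $(X,B)$.

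First I would invoke a polarised version of bounded complement theory: the aim is to produce $n\in \N$ and $t\in \Q^{>0}$, depending only on $d,v$, together with some $A'\ge 0$ satisfying $A'\sim_\Q tA/Z$, such that after shrinking $Z$ around $z$ there is an effective $\Lambda\ge B+A'$ with $n(K_X+\Lambda)\sim 0/z$ and $(X,\Lambda)$ lc. Since $K_X+B\sim_\Q 0/Z$, the input allowing bounded complements is no longer a bigness of $-K_X$, but rather the polarisation $A$ together with the bound $\vol(A|_F)\le v$; the latter forces the general fibres (and the horizontal parts of $\Lambda$) to lie in a bounded family of $\epsilon$-lc polarised Calabi--Yau pairs, which is the substitute for BAB in the Fano-type argument.

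With complements in hand, Steps 2--4 proceed in close analogy with Theorem \ref{t-mc-sh-conj}. Boundedness of $\epsilon$-lc Calabi--Yau pairs polarised by a divisor of bounded volume yields a birational model $Y\to Z$ on which the transform $D$ of the horizontal part of $\Lambda$ makes $(Y,D)$ relatively bounded over $Z$. The techniques of families of nodal curves and semistable reduction then produce, after a controlled base change of $Z$, a toroidal model $(Y',D')\to(Z',E')$ bounded relative to $Z'$. Localising at the centre $y'$ of a chosen lc place of $(X,\Lambda)$ computing the log discrepancy of $E$ on $Z$ gives a local toric model $(Y'',D'')$.

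Finally, I would translate the desired lower bound on the log discrepancy of $E$ with respect to $(Z,B_Z+M_Z)$ into a purely combinatorial statement about the toric model $(Y'',D'')$ matching the hypotheses of Theorem \ref{t-toric-problem}, and invoke that theorem to conclude. The main obstacle is Step 1: establishing bounded complements is considerably more delicate in the polarised Calabi--Yau setting than in the Fano-type setting, because $-K_X$ is no longer big over $Z$. One must bootstrap through the polarisation $A$, using a lower-dimensional inductive statement together with boundedness of $\epsilon$-lc polarised Calabi--Yau fibres, in order to produce complements whose index $n$ and perturbation parameter $t$ depend only on $d,v$, while still preserving enough control of singularities to feed the later toroidal and toric steps. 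Once this polarised complement step is in place, the remaining geometry is essentially identical to the Fano-type case.
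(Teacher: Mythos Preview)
The paper does not give a self-contained proof of this theorem; it quotes the result from [\ref{B-Fano-fib}] and indicates only that it is deduced from Theorem~\ref{t-bnd-mult-lc-places-fib-main} (``In order to prove the above results on singularities on Fano type and Calabi-Yau fibrations, it is important to consider more general fibrations as in the next result''). So the paper's route is: use the polarisation to put the general fibres in a bounded family, then feed everything into Theorem~\ref{t-bnd-mult-lc-places-fib-main}, whose proof is the one carrying the six-step toroidal/toric machinery. Your plan to re-run the six steps directly is in the right spirit, but Step~1 as you have written it is self-contradictory and this is a genuine gap, not just a delicacy.

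Concretely: you ask for $A'\ge 0$ with $A'\sim_\Q tA/Z$ and an effective $\Lambda\ge B+A'$ such that $n(K_X+\Lambda)\sim 0$ over a neighbourhood of $z$. But $K_X+B\sim_\Q 0/Z$, so $K_X+\Lambda\ge K_X+B+A'\sim_\Q tA$ over $Z$. Since $A$ is big over $Z$, the restriction of $K_X+\Lambda$ to the general fibre over any neighbourhood of $z$ is big, hence cannot be $\Q$-linearly trivial. No such ``polarised complement'' exists, and no amount of bootstrapping or induction will produce one. The polarisation $A$ does \emph{not} enter through the complement; it enters at Step~2, where the bound $\vol(A|_F)\le v$ forces the general fibres $(F,B|_F)$ to lie in a bounded family of polarised $\epsilon$-lc Calabi--Yau pairs, which is what gives the relatively bounded model $(Y,D)\to Z$. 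The lc structure $\Lambda$ needed for Theorem~\ref{t-bnd-mult-lc-places-fib-main} (and for the toroidal/toric reduction) is obtained differently: since $K_X+B\sim_\Q 0/Z$ already, one manufactures lc centres over $z$ by adding \emph{vertical} divisors (pullbacks from $Z$) rather than a horizontal big divisor, keeping $K_X+\Lambda\sim_\Q 0$ over $z$. Once you separate the two roles --- $A$ for boundedness of fibres, vertical divisors for the complement --- the rest of your outline (Steps~2--6 reducing to Theorem~\ref{t-toric-problem}) is indeed how the argument in [\ref{B-Fano-fib}] proceeds.
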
 

Similar to the Fano-type case, this immediately implies boundedness of multiplicities of fibres over codimension one points and boundedness of singularities of $Z$ in case $K_Z$ is $\Q$-Cartier. 
The theorem is very useful for studying Calabi-Yau fibrations (e.g. in moduli theory [\ref{B-moduli}]) when used in conjunction with [\ref{B-geom-pol-var}][\ref{BZh}][\ref{B-bnd-vol-gen-pairs}].

\bigskip

\subsection{Singularities on more general fibrations}

In order to prove the above results on singularities on Fano type and Calabi-Yau fibrations, it is important to consider more general fibrations as in the next result.

\begin{thm}[{[\ref{B-Fano-fib}]}]\label{t-bnd-mult-lc-places-fib-main}
	Let $d,r\in \mathbb{N}$ and let $\epsilon \in \mathbb{R}^{>0}$.
Then there is $l\in \N$ depending only on $d,r,\epsilon$ satisfying the 
following. Assume that 
\begin{itemize}

\item $(W,B_W)$ is an $\epsilon$-lc pair and $(X,\Lambda)$ is an lc pair, both of dimension $d$,

\item $W\to X$ is a birational contraction and $X\to Z$ is a surjective projective morphism onto a smooth curve, 

\item $K_W+B_W$ is nef$/X$ (or nef$/Z$), 

\item  $A$ is a very ample$/Z$ divisor on $X$ such that $\deg_{A/Z}A\le r$,

\item $A-B$ and $A-\Lambda$ are pseudo-effective over $Z$ where $B$ is the pushdown of $B_W$, 

\item $r\Lambda$ is integral, and 

\item $T$ is a prime divisor over $X$ mapping to a closed point $z\in Z$ with 
$$
a(T,W,B_W)\le 1 ~~~\mbox{and}~~~ a(T,X,\Lambda)=0.
$$
\end{itemize}
Then $\mu_TF\le l$ where $F$ is the fibre of $X\to Z$ over $z$. That is, if $\phi\colon V\to X$ is a 
resolution so that $T$ is a divisor on $V$, then $\mu_T\phi^*F\le l$.
\end{thm}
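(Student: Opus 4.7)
The plan is to follow the six-step strategy already sketched in the discussion of Theorem \ref{t-mc-sh-conj}. First, exploiting that $A-\Lambda$ is pseudo-effective$/Z$, $\deg_{A/Z}A\le r$ and $r\Lambda$ is integral, one runs complement theory over the point $z\in Z$ to produce, after possibly enlarging $\Lambda$ but keeping $T$ as an lc place, a pair whose horizontal coefficients and Cartier index are bounded in terms of $d,r$. This replaces the original $\Lambda$ by one with much tighter combinatorial control while losing nothing about $T$.

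Next, the general fibre $F_\eta$ of $f\colon X\to Z$, together with the restriction of the horizontal part of the complemented boundary, lies in a log bounded family: the bound on $\deg_{A/Z}A$ gives boundedness of $F_\eta$ itself, while Theorem \ref{t-sing-lin-sys} applied on $F_\eta$ via $A|_{F_\eta}$ controls the horizontal coefficients. Because the base $Z$ is a smooth curve, one can then appeal to semistable/nodal reduction for log families over curves (\`a la de Jong, Abramovich--Karu) to obtain, after a bounded base change $Z'\to Z$ and a suitable birational modification, a relatively toroidal log-smooth model $(Y',D')\to (Z',E')$ of $(X,\Lambda)\to Z$ whose combinatorial data is bounded in terms of $d,r,\epsilon$; here $D'$ carries the horizontal lc places and $E'$ contains the fibre over $z$. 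The centre of $T$ on $Y'$ is then a torus-invariant stratum, so \'etale-locally the toroidal structure is modelled by a bounded toric pair $(Y'',D'')$, and $T$ corresponds to a primitive ray in the associated fan.

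In this toric picture $\mu_T\phi^*F$ becomes the value of the monomial cutting out the fibre on the ray of $T$, while the input that $(W,B_W)$ is $\epsilon$-lc with $K_W+B_W$ nef$/X$ (or $/Z$) and $a(T,W,B_W)\le 1$ translates into a quantitative lower bound on the distance between the ray of $T$ and the faces of the fan corresponding to $D''$. This is precisely the shape of the toric statement Theorem \ref{t-toric-problem}, whose application yields the uniform bound $\mu_TF\le l$ depending only on $d,r,\epsilon$. The main obstacle is Steps 2--3: constructing a toroidal model that is simultaneously relatively bounded over $Z$ and compatible with all the lc places of $(X,\Lambda)$ sitting over $z$, since naive semistable reduction tends to destroy both the numerical control on $A$ and the bounded coefficients of the complement unless it is carefully orchestrated with complement theory and with BAB on the general fibre.
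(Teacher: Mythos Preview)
Your proposal follows essentially the same six-step strategy that the paper sketches for Theorem \ref{t-mc-sh-conj} and attributes, with overlap, to the proof of Theorem \ref{t-bnd-mult-lc-places-fib-main}: complement-type data, relative boundedness of the generic fibre, toroidalisation via nodal/semistable reduction, passage to a local toric chart, and reduction to Theorem \ref{t-toric-problem}. One small point: in the setting of Theorem \ref{t-bnd-mult-lc-places-fib-main} the pair $(X,\Lambda)$ with $r\Lambda$ integral and $a(T,X,\Lambda)=0$ is already part of the hypotheses, so Step~1 (producing a bounded complement) is essentially given rather than something you need to run; and correspondingly the relative boundedness of the general fibre in Step~2 comes directly from $\deg_{A/Z}A\le r$, not from BAB (which is what is used for Theorem \ref{t-mc-sh-conj} to manufacture such an $A$). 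Otherwise your outline matches the paper's.
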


This looks technical but in essence it says that on fibrations $(W,B_W)\to Z$ where $(W,B_W)$ has good singularities, $K_W+B_W$ is nef over $Z$ (a minimality condition), and $(W,B_W)$ is ``generically bounded" over $Z$ (in some sense), then we can control singularities. Similar conditions also appear in Theorems \ref{t-mc-sh-conj} and \ref{t-cb-sing-usual-fib}.

We now state a simpler variant of \ref{t-bnd-mult-lc-places-fib-main} which is easier to digest and also much easier to prove. 
 
\begin{thm}\label{t-lct-gen-type}
	Let $d,v\in \mathbb{N}$ and $\epsilon \in \mathbb{R}^{>0}$, and let $\Phi\subset \mathbb{Q}^{>0}$ be a DCC set. Then there is $t\in \mathbb{R}^{>0}$ depending only on $d,v,\epsilon, \Phi$ satisfying the 
following. Assume that 
\begin{itemize}

\item $(W,B_W)$ is an $\epsilon$-lc pair of dimension $d$,

\item $f\colon W\to Z$ is a contraction onto a smooth curve, 

\item the horizontal$/Z$ coefficients of $B_W$ belong to $\Phi$, and  

\item $K_W+B_W$ is nef and big$/Z$ with 
$$
\vol((K_W+B_W)|_F)\le v
$$
for the general fibres $F$ of $f$. 
\end{itemize}
Then $(W,B_W+tF)$ is lc for any fibre $F$ of $f$ over a closed point.
\end{thm}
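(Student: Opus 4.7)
The strategy is to reduce Theorem~\ref{t-lct-gen-type} to the main technical Theorem~\ref{t-bnd-mult-lc-places-fib-main}: the volume bound $v$, DCC boundary $\Phi$, and $\epsilon$-lc hypothesis force log boundedness of the general fibres via Hacon--McKernan--Xu, and this log boundedness in turn produces the relatively very ample polarisation $A$ that the hypotheses of \ref{t-bnd-mult-lc-places-fib-main} demand.

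For a general closed fibre $F$ of $f$, adjunction gives that $(F, B_F := B_W|_F)$ is $\epsilon$-lc of dimension $d-1$, with $K_F + B_F = (K_W+B_W)|_F$ nef and big of volume at most $v$ and with boundary coefficients in $\Phi$. Hacon--McKernan--Xu boundedness of log canonical pairs of log general type with bounded volume, $\epsilon$-lc singularities and DCC coefficients makes such pairs log bounded as $(W, B_W, f)$ varies over the data in the theorem. Since $K_W+B_W$ is nef and big over $Z$, its relative ample model over $Z$ produces a birational contraction $\phi\colon W\to X$ with $\phi^*(K_X+B_X) = K_W+B_W$, where $B_X := \phi_*B_W$, with $K_X+B_X$ ample over $Z$ and $\phi$ restricting to the log canonical map on each general fibre. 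Log boundedness of general fibres of $W\to Z$ descends to those of $X\to Z$, so for some $m = m(d, v, \epsilon, \Phi)$ the divisor $A := m(K_X+B_X)$ is very ample over $Z$, with $\deg_{A/Z} A \le r := m^{d-1}v$ and $A - B_X$ ample over $Z$.

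Set $t_0 := \lct(F; W, B_W)$, which is rational because $B_W$ and $F$ are $\Q$-divisors. If $t_0 > 1/l$ where $l = l(d, r, \epsilon)$ is the constant supplied by Theorem~\ref{t-bnd-mult-lc-places-fib-main}, we are immediately done with $t := 1/l$. Otherwise, pass to a $\Q$-factorial dlt modification of $(W, B_W + t_0 F)$ and select a prime divisor $T$ over $W$ realising the threshold, i.e.\ with $a(T, W, B_W + t_0 F) = 0$, and with $a(T, W, B_W) \le 1$. Set $\Lambda := B_X + t_0 F_X$ where $F_X$ is the fibre of $X\to Z$ over $z$, enlarging $r$ as needed so that $r\Lambda$ is integral; the denominators of coefficients of $B_X$ and of $t_0$ are uniformly bounded thanks to log boundedness. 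The remaining hypotheses of \ref{t-bnd-mult-lc-places-fib-main} hold: $A - \Lambda$ is numerically equivalent to $A - B_X$ over $Z$ since $F_X$ is vertical, hence is ample over $Z$ and pseudo-effective; $K_W+B_W$ is nef over $X$; and $a(T, X, \Lambda) = a(T, W, B_W + t_0 F) = 0$. The theorem then yields $\mu_T F \le l$, and combining with the identity $a(T, W, B_W) = t_0\,\mu_T F$ together with the $\epsilon$-lc property gives
$$
t_0 \;=\; \frac{a(T, W, B_W)}{\mu_T F} \;\ge\; \frac{\epsilon}{l},
$$
so $t := \epsilon/l$ depends only on $d, v, \epsilon, \Phi$ and works uniformly.

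\textbf{Main obstacle.}
The central work is in the second paragraph: translating pointwise log boundedness of the general fibres into a uniform multiple $m$ yielding $A$ very ample over $Z$ with $A - B_X$ ample over $Z$. This uses stability of the relative ample model on log bounded families together with effective very-ampleness for log bounded families, both standard but requiring some careful setup. A subtler issue is ensuring the existence of an lc place $T$ of $(W, B_W + t_0 F)$ with the additional property $a(T, W, B_W) \le 1$ required by Theorem~\ref{t-bnd-mult-lc-places-fib-main}; in general the deepest lc place may have arbitrarily large discrepancy in $(W, B_W)$, so one selects $T$ among the boundary components of a dlt modification of $(W, B_W + t_0 F)$, where the low-discrepancy option is available, supplemented by a small case analysis in the regime $t_0 \le 1/l$. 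The denominator bookkeeping ensuring $r\Lambda$ is integral is routine given log boundedness.
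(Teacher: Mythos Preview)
Your strategy reduces Theorem~\ref{t-lct-gen-type} to the much harder Theorem~\ref{t-bnd-mult-lc-places-fib-main}; the paper goes in the opposite direction, presenting~\ref{t-lct-gen-type} as the easy special case and giving a short direct proof that invokes neither~\ref{t-bnd-mult-lc-places-fib-main} nor log boundedness of fibres. There one builds an auxiliary dlt model $(Y,\Gamma_Y)$ over $Z$ in which every component of the special fibre appears in $\Gamma_Y$ with coefficient~$1$, applies adjunction and the Hacon--M\textsuperscript{c}Kernan--Xu volume \emph{lower} bound to each component $T_p$ of $h^*z$, and then the identity $(K_Y+\Gamma_Y)^{d-1}\cdot H=(K_Y+\Gamma_Y)^{d-1}\cdot h^*z=\sum_p m_p\vol(K_{T_p^\nu}+\Gamma_{T_p^\nu})$ bounds all the multiplicities $m_p$ at once; a comparison of $B_Y$ with $\Gamma_Y$ finishes.

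Your reduction also has two gaps that I do not see how to repair. First, the hypothesis ``$r\Lambda$ is integral'' of~\ref{t-bnd-mult-lc-places-fib-main} cannot be verified for $\Lambda=B_X+t_0F_X$: only the \emph{horizontal} coefficients of $B_W$ lie in $\Phi$, so log boundedness of general fibres says nothing about the denominators of the vertical part of $B_X$, and nothing at all about the denominator of the threshold $t_0$. Your claim that these are ``uniformly bounded thanks to log boundedness'' is unjustified. Second, the existence of an lc place $T$ of $(W,B_W+t_0F)$ with $a(T,W,B_W)\le 1$ is not automatic: for any such lc place one has $a(T,W,B_W)=t_0\,\mu_TF$, so you are asking for $\mu_TF\le 1/t_0$, but already a fibre carrying a curve singularity of type $y^p=x^q$ has its threshold computed by a single divisor of log discrepancy $p+q$, and a dlt modification extracts only lc places, all of which satisfy the same identity. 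The promised ``small case analysis in the regime $t_0\le 1/l$'' is exactly where this obstruction is worst, so that analysis would have to do essentially all the work.
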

 
It is possible to formulate similar results when the base $Z$ is not a curve. 

Even the case of the theorem when $W$ is a smooth surface and $B_W=0$ is informative. In this case, it is easy to control singularities of curve $C$ in the fibre $F$ for which $K_X\cdot C>0$. But if $K_X\cdot C=0$, then more work is required.

Since the theorem has not been published anywhere, we include a slightly sketchy proof for convenience, following [\ref{B16}]. 

\begin{proof}[Proof of Theorem \ref{t-lct-gen-type}] 
We will assume that $d\ge 2$ as the case $d=1$ is trivial.\\

	\emph{Step 1.}
	We can assume $K_W+B_W$ is ample over $Z$. 
	Fix a closed point \(z \in Z\).
	Take a log resolution \(\varphi \colon V\longrightarrow X\) of \((W,B_W + f^{*}z)\).
	Let 
\begin{itemize}
	\item \(\{M_{i}\}\) be the components of \(\varphi^{*}f^{*}z\),
	\item \(\{M'_j\}\) be the exceptional divisors of \(\varphi\) that are not among \(M_{i}\).
	\end{itemize}	
	Define
	\[
		\Gamma_V = \sum M_{i} + \sum (1 - {\epsilon}) M'_j + {B^h_W}^\sim
	\]
	where ${B^h_W}^\sim$ is the birational transform of the horizontal part $B^h_W$.
	Then 
\begin{itemize}
	\item \((V,\Gamma_V)\) is log smooth and dlt, 
	\item \(\rddown{\Gamma_V} = \Supp \varphi^{*}f^{*}z\),
	\item \(K_V + \Gamma_V\) is big over \(Z\).\\
\end{itemize}

	\emph{Step 2.}
	It is possible to run an MMP on \(K_V + \Gamma_V\) over $Z$ which ends with a minimal model
	\((Y,\Gamma_Y)\) where \(K_Y+\Gamma_Y\) is semi-ample over \(Z\).
	The main point is that \(\rddown{\Gamma_V} = \Supp \varphi^{*}f^{*}z\), so
	\((V, \Gamma_V - s\varphi^{*}f^{*}z)\) is klt for some small \(s>0\), and over $Z$ we have
	\[
		K_V + \Gamma_V - s\varphi^{*}f^{*}z \equiv K_V + \Gamma_V.
	\]\\

	\emph{Step 3.}
	Replace $Y$ with the ample model of $K_Y+\Gamma_Y$ over $Z$ so that we can assume $K_Y+\Gamma_Y$ is ample over $Z$. 
	One can check that if \(H\) is a general fibre of \(h\colon Y\longrightarrow Z\),
	then
	\[
		\vol((K_Y + \Gamma_Y)|_H)
	\]
	is bounded. Indeed, $W\bir Y$ is an isomorphism over the generic point of $Z$.
	
	Pick a component \(T\) of \(h^{*}z\).
	Define
	\[
		K_{T^\nu} + \Gamma_{T^\nu} = (K_Y + \Gamma_Y)|_{T^\nu}
	\]
	by adjunction, where ${T^\nu}$ is the normalisation of $T$.
	Since the coefficients of \(\Gamma_Y\) are in \(\Phi\cup \{1\}\), the
	coefficients of \(\Gamma_{T^\nu}\) are in some fixed DCC set.
	
	Now by [\ref{HMX-acc}],
	\[
		\vol(K_{T^\nu} + \Gamma_{T^\nu}) \geq \theta >0
	\]
	where \(\theta\) is fixed.\\

	\emph{Step 4.}
	Write \(h^{*}z = \sum m_p T_p\).
	Then
	\begin{align}
		\vol(K_H + \Gamma_H) & = (K_H + \Gamma_H)^{d-1}                   \\
		                     & = ((K_Y + \Gamma_Y)|_H)^{d - 1}            \\
		                     & = (K_Y + \Gamma_Y)^{d-1} \cdot H           \\
		                     & = (K_Y + \Gamma_Y)^{d - 1} \cdot h^{*}z    \\
		                     & = \sum m_p ((K_Y + \Gamma_Y)|_{T^\nu_p})^{d-1} \\
		                     & = \sum m_p \vol(K_{T^\nu_p} + \Gamma_{T^\nu_p})
	\end{align}
	where
	\[
		K_{T^\nu_p} + \Gamma_{T^\nu_p} = (K_Y + \Gamma_Y)|_{T^\nu_p}.
	\]
	Therefore,
	\[
		\vol(K_H + \Gamma_H) = \sum m_p \vol(K_{T^\nu_p} + \Gamma_{T^\nu_p}) \geq \sum m_p \theta.
	\]
	Since \(\vol(K_H + \Gamma_H)\) is bounded from above, we deduce that \(m_p\) are all bounded.\\

	\emph{Step 5.}
			We have then shown that the fibre of $Y\to Z$ over $z$ has bounded coefficients. 
	Write \(K_V + B_V = \varphi^{*}(K_W + B_W)\).
	Since \((W,B_W)\) is \(\epsilon\)-lc, we can see that 
	\[B_V + \epsilon \rddown{\Gamma_V} \leq \Gamma_V\] 
	and pushing this down to $Y$, we have 
	\[B_Y + \epsilon \rddown{\Gamma_Y} \leq \Gamma_Y.\]
	Thus by the previous step, there exists a fixed \(t > 0\) such that
	\[
		K_Y + B_Y + t h^{*}z \leq K_Y + \Gamma_Y.
	\]
	
	Now since \(K_W + B_W\) is nef over $Z$ and \(h^{*}z \equiv 0 / Z\), applying the negativity lemma on a common resolution of $W,Y$ we can see that the pullback of  
	\(K_W + B_W + t f^{*}z\) is at most the pullback of \(K_Y + \Gamma_Y\).
	Therefore, \((W, B_W + t f^{*}z)\) is lc.
\end{proof} 

\subsection{Toric singularities}

The following theorem was stated as a conjecture by the author in a seminar in 2018 in London. 

\begin{thm}[{Multiplicities on weighted blowups, [\ref{SS-weighted}]}]\label{t-SS}
Let $d\in \mathbb{N}$ and $\epsilon \in \mathbb{R}^{>0}$. Let $f\colon X\to \A^d$ be the weighted blowup given by the weights $(n_1,\dots,n_d)$ and assume that $X$ is $\epsilon$-lc. Then $\min\{n_1,\dots,n_d\}$ is bounded from above.
\end{thm}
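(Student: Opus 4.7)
The plan is to translate the $\epsilon$-lc hypothesis on $X$ into a Diophantine statement on the cyclic-quotient chart of $X$ corresponding to the smallest weight, and then apply simultaneous Dirichlet approximation to exhibit a divisor with small log discrepancy whenever $N := \min n_i$ is large. After dividing by $\gcd(n_1, \dots, n_d)$ I may assume the weights are coprime; relabel so $n_1 = N$. Let $v = (n_1, \dots, n_d)$, so that the fan of $X$ in $\Z^d$ has maximal cones $\sigma_k = \langle v, e_1, \dots, \widehat{e_k}, \dots, e_d\rangle$ for $k = 1, \dots, d$, with $e_1, \dots, e_d$ the standard basis. The cone $\sigma_1$ has lattice index $|\det(v, e_2, \dots, e_d)| = N$, so its affine chart $U_1 \subset X$ is the cyclic-quotient singularity $\A^d/\mu_N$ whose generator acts on the $\sigma_1$-coordinates with weights $(1, -n_2, \dots, -n_d) \bmod N$.

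By the Reid--Tai age formula for toric cyclic-quotient singularities, for each $k \in \{1, \dots, N-1\}$ the class $k(e_1 + N')$ in $N/N' \cong \Z/N$ (where $N' = \langle v, e_2, \dots, e_d\rangle$) has as its fundamental-domain representative a lattice point $w_k \in \sigma_1$ whose log discrepancy on $X$ equals
\[
\mathrm{age}(k) \;=\; \frac{k}{N} + \sum_{j=2}^{d}\left\{-\frac{k n_j}{N}\right\},
\]
where $\{\cdot\}$ denotes the fractional part and each summand equals $0$ when $N \mid k n_j$. The $\epsilon$-lc hypothesis on $X$ thus requires $\mathrm{age}(k) \geq \epsilon$ for every such $k$, so it suffices to exhibit a single $k$ for which this sum is small in terms of $N$.

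To this end I would apply Dirichlet's simultaneous approximation theorem to the tuple $(n_2/N, \dots, n_d/N) \in \R^{d-1}$ with a parameter $Q$: there exist $k \in \{1, \dots, Q\}$ and integers $p_j$ with $|k n_j/N - p_j| \leq Q^{-1/(d-1)}$. Optimising $Q \asymp N^{(d-1)/d}$ and balancing the $k/N$ term against the $(d-1) Q^{-1/(d-1)}$ control on the summands would give a candidate $k$ with $\mathrm{age}(k) \lesssim C(d) \cdot N^{-1/d}$, yielding the desired bound $N \leq (C(d)/\epsilon)^d$. The main obstacle is that Dirichlet is two-sided: each $\{k n_j/N\}$ is close either to $0$ or to $1$, and only the latter option makes $\{-k n_j/N\}$ small. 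To convert the two-sided estimate into a genuine small-age divisor I would either pigeonhole on the $2^{d-1}$ ``octants'' near the origin of the $(d-1)$-torus $(\R/\Z)^{d-1}$, at the cost of a $2^{d-1}$ factor in $Q$, or combine divisors obtained from $k$ with those from $N-k$ and from the other cones $\sigma_2, \dots, \sigma_d$, using that the involution $k \mapsto N - k$ swaps the sign patterns. This combinatorial step resolving the sign ambiguity is the crux of the proof; the Dirichlet input and toric age formula are the easy part, while the one-sided upgrade is what ultimately forces the bound on $\min\{n_1, \dots, n_d\}$.
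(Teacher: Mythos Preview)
The paper does not itself prove Theorem~\ref{t-SS}; it attributes the result to Sankaran--Santos and only summarises their method in one sentence: ``The proof uses topological properties of subgroups of $\R^d$ disjoint from an open set,'' adding immediately that ``for now there is no known direct proof that works in every dimension and does not rely on geometry.'' So the benchmark you should be comparing against is the Sankaran--Santos argument, which reformulates the $\epsilon$-lc condition as the statement that a certain cyclic subgroup of a torus (built from the weight vector) avoids an open neighbourhood of the origin, and then invokes a structural fact about closed subgroups of $\R^d$ (or of a torus) disjoint from an open set to force the minimum weight to be bounded. This is \emph{not} a Dirichlet-approximation argument.

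Your proposal is a genuinely different route, and the gap you yourself flag is real and, as the paper's remark indicates, not currently known to be closable by elementary means. Concretely, neither of your two suggested fixes for the sign ambiguity works. Pigeonholing on the $2^{d-1}$ octants only tells you that \emph{some} octant near the origin of $(\R/\Z)^{d-1}$ is hit often; it gives no reason why the specific ``all coordinates near $1$'' octant is ever hit with small $k$. The involution $k\mapsto N-k$ does flip the signs of $\{-kn_j/N\}$ for $j\ge 2$, but it simultaneously sends the leading term $k/N$ to $(N-k)/N\approx 1$, so $\mathrm{age}(N-k)$ is close to $1$, not small. Passing to the other charts $\sigma_2,\dots,\sigma_d$ replaces $N$ by the larger indices $n_j\ge N$ and changes the weight pattern, so it does not obviously improve matters either. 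In short, the Reid--Tai/Dirichlet framework is the natural first attempt, but turning the two-sided simultaneous approximation into the one-sided statement needed here is exactly the missing idea, and the Sankaran--Santos proof sidesteps it entirely by the subgroup/open-set argument rather than by any Diophantine refinement.
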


The proof uses topological properties of subgroups of $\R^d$ disjoint from an open set. For now there is no known direct proof that works in every dimension and does not rely on geometry. 

Let $t_1,\dots,t_d$ be the coordinates on $\mathbb{A}^d$ and $H_i$ the vanishing set of $t_i$. 
Let $T$ be the exceptional divisor of $f$. Then $\mu_Tf^*H_i=n_i$. So the theorem says that the multiplicity $\mu_Tf^*H_i$ is bounded from above, for some $i$. We can also think of this as saying that the lc threshold $\lct(X,0,f^*H_i)$ is bounded from below for some $i$.


On the other hand, in the fibre case we have: 

\begin{thm}[{Multiplicities on toric Fano fibrations, [\ref{BC-toric}]}]\label{t-BC-toric}
Let $d\in \mathbb{N}$ and $\epsilon \in \mathbb{R}^{>0}$. Let $f\colon X\to Z$ be a toric Fano fibration where $X$ is $\epsilon$-lc. Then multiplicities of the fibres of $f$ over codimension one points of $Z$ are bounded from above.
\end{thm}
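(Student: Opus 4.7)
The plan is to reduce the statement to Corollary~\ref{cor-mult-fib} by slicing $Z$ down to a curve. The toric hypothesis is not essential for this route, but would be natural if one pursues instead a direct, elementary combinatorial proof.

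Fix a smooth closed point $z_0$ of the codimension-one subvariety $\overline{\{z\}}\subseteq Z$ at which both $Z$ and $f$ are smooth. Choose a general complete intersection curve $C\subseteq Z$ through $z_0$, cut out by general members $H_1,\dots,H_{\dim Z-1}$ of suitable very ample linear systems, so that $C$ meets $\overline{\{z\}}$ transversely at $z_0$. Form the base change $X_C:=X\times_Z C$ with projection $f_C\colon X_C\to C$. By flat base change $f_C$ is a contraction; by adjunction $-K_{X_C}=(-K_X)|_{X_C}-f_C^{*}(\sum_i H_i|_C)$, which differs from the relatively ample divisor $(-K_X)|_{X_C}$ only by a pullback from $C$ and is therefore itself relatively ample over $C$. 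Hence $f_C$ is a Fano fibration. A Bertini-type argument for $\epsilon$-lc pairs shows that $X_C$ is still $\epsilon$-lc, and $\dim X_C=\dim X-\dim Z+1\leq d$.

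Since $C$ meets $\overline{\{z\}}$ transversely at $z_0$, the component multiplicities of $f_C^{*}z_0$ coincide with those of $f^{*}z$ at the corresponding generic points. Applying Corollary~\ref{cor-mult-fib} to $f_C\colon X_C\to C$ produces a uniform upper bound $l(d,\epsilon)$ on these multiplicities, which is the desired conclusion.

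The main obstacle is the Bertini step, namely preservation of the $\epsilon$-lc property under general slicing; this follows from the general fact that log discrepancies of divisorial valuations over $X$ specialise correctly to valuations over $X_C$ for a general slice, so the minimal log discrepancy does not drop. An alternative route, more in keeping with the toric setting and presumably closer to the original argument, avoids Corollary~\ref{cor-mult-fib} entirely and proceeds combinatorially: the $\epsilon$-lc condition translates to a lower bound on coefficient sums for lattice vectors in cones of $\Sigma_X$ expressed in primitive ray generators, while the relative Fano condition translates to strict convexity of the piecewise-linear roof function along the fibres of the lattice map $\phi\colon N\to N_Z$. The bound on $m_\rho$, defined by $\phi(v_\rho)=m_\rho w$ for each vertical ray $\rho$ over the ray with primitive generator $w$ corresponding to $z$, then follows by a combinatorial estimate in the spirit of Theorem~\ref{t-SS}.
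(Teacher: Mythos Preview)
The paper does not contain a proof of this theorem; it is quoted from [\ref{BC-toric}], and the surrounding discussion (together with the interpretation via the function $\alpha'$ later in the section) makes clear that the argument there is a direct combinatorial one in toric geometry, essentially along the lines of your ``alternative route'' at the end. So your second sketch, not your first, is the one matching the intended proof.

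Your primary route via Corollary~\ref{cor-mult-fib} is logically permissible in the dependency graph of this survey: Corollary~\ref{cor-mult-fib} follows from Theorem~\ref{t-mc-sh-conj}, whose proof (as outlined in the paper) ultimately reduces to Theorem~\ref{t-toric-problem}, and the latter is established directly in [\ref{B-Fano-fib}] rather than by appeal to Theorem~\ref{t-BC-toric}. So there is no circularity. That said, it is historically and structurally backwards: [\ref{BC-toric}] predates [\ref{B-Fano-fib}] and was one of the test cases motivating the general theory, so invoking the full strength of [\ref{B-Fano-fib}] to recover it is a considerable overkill. What your route buys is that it makes transparent that the toric hypothesis is irrelevant once the general Fano-fibration machinery is available; what the original combinatorial route buys is an elementary, self-contained argument that served as evidence for the general conjecture.

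A few technical points in your slicing argument need tightening. First, requiring that ``$f$ is smooth'' at $z_0$ is far too strong (it would force all the multiplicities you are trying to bound to equal $1$); what you want is that $z_0$ is a general closed point of $\overline{\{z\}}$ so that the component multiplicities of $f^{-1}(z_0)$ agree with those of $f^*z$. Second, ``flat base change'' is not the reason $f_C$ is a contraction ($f$ need not be flat); rather, $X_C$ is the preimage $f^{-1}(C)$, which is normal and connected by Bertini for the base-point-free system $|f^*H|$, and its fibres over $C$ coincide set-theoretically with fibres of $f$, hence are connected. Third, $Z$ need not be projective, so either work locally on $Z$ (replacing it by an affine neighbourhood of $z_0$) or pass to a suitable compactification before choosing the $H_i$. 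Finally, the preservation of the $\epsilon$-lc condition under cutting by general members of a base-point-free linear system is standard, but you should cite it rather than leave it as a remark about specialisation of log discrepancies.
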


We will state a far more subtle generalisation of the previous theorem. 
Fix $d,r\in \mathbb{N}$ and $\epsilon\in \R^{>0}$. 
Pick a primitive vector $(n_1,\dots,n_d)$ of integers where $n_1\ge 0$. 
Let $e_1,\dots, e_d$ be the standard basis of $\Z^d$. The vectors 
$$
e_2,\dots,e_d, \ -(e_2+\dots+e_d), \ (n_1,\dots,n_d)
$$ 
generate a fan structure in $\R^d$. 
This defines a toric morphism $f\colon X\to \mathbb{A}^1$.
The reduced fibre $T=f^{-1}\{0\}$ is a prime divisor corresponding to $(n_1,\dots,n_d)$.
Take a (e.g. toric) resolution of singularities $\psi\colon W\to X$ with exceptional divisors $E_1,\dots,E_p$. 

\begin{thm}[{Multiplicities of fibres of toric-related fibrations, [\ref{B-Fano-fib}]}]\label{t-toric-problem} 
Given $d,r\in \mathbb{N}$ and $\epsilon\in \R^{>0}$ as above, assume that  
$$
T \ \not\subseteq \ {\bf B}(K_W+\sum_1^p(1-\epsilon)E_i-\psi^*rK_X).
$$ 
Then $n_1$ is bounded from above.
\end{thm}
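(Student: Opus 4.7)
The plan is to use the torus action to translate the hypothesis into a lattice-point condition on $v_1 := (n_1,\ldots,n_d)$, and then apply a Minkowski-type argument. First, the divisor $L := K_W + \sum_i(1-\epsilon)E_i - \psi^* r K_X$ is torus-invariant, so $\mathbf{B}(L)$ is too. Since $\widetilde T$ is a torus-invariant prime divisor on $W$, a standard toric argument shows that $\widetilde T \not\subseteq \mathbf{B}(L)$ is equivalent to the existence of a character $u \in \R^d$ such that $L + \divi(\chi^u)$ is effective with vanishing coefficient on $\widetilde T$. A direct toric computation, using $K_W = -\sum_\rho D_\rho$ on $W$ and $\psi^*(-K_X) = \sum_\rho \widetilde D_\rho + \sum_j a(E_j,X,0)\,E_j$, then rephrases this as: the piecewise linear function $\psi_u := r\phi_X + \langle u,\cdot\rangle$ on $|\Sigma_X|$ satisfies $\psi_u(v_1) = 1$, $\psi_u(v_\rho) \geq 1$ for each other ray $v_\rho$ of $\Sigma_X$, and $\psi_u(v) \geq \epsilon$ for every primitive lattice vector $v$ in the interior of a maximal cone of $\Sigma_X$; here $\phi_X$ is the log-discrepancy function of $(X,0)$, linear on each maximal cone and equal to $1$ on ray generators.

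The inequalities $\psi_u(e_i) \geq 1$ for $i \geq 2$ and $\psi_u(-(e_2+\cdots+e_d)) \geq 1$ translate to $u_i \geq -(r-1)$ and $\sum_{i\geq 2} u_i \leq r-1$, so each $u_j$ (and consequently each $\tau_l := \psi_u(v_{i_l})$ attached to a non-$v_1$ ray of a maximal cone containing $v_1$) lies in a fixed interval $[1,C(d,r)]$. Fix such a cone, say $\sigma = \langle v_1, e_2, \ldots, e_d\rangle$. Since the sublattice $\Z\langle v_1, e_2, \ldots, e_d\rangle \subseteq \Z^d$ has index $n_1$, lattice points in the fundamental parallelepiped of $\sigma$ are parameterized by $k \in \{0,1,\ldots,n_1-1\}$ via $v^{(k)} = (k/n_1) v_1 + \sum_{j \geq 2} \{-k n_j/n_1\}\, e_j$, and one computes $\psi_u(v^{(k)}) = (k/n_1) + \sum_{j \geq 2} \tau_j \{-k n_j/n_1\}$, which must be $\geq \epsilon$ for every $k \in \{1,\ldots,n_1-1\}$.

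Finally, to bound $n_1$, I would apply Minkowski's theorem to the lattice $\Lambda = \{(k,\ell_2,\ldots,\ell_d) \in \Z^d : \ell_j \equiv -kn_j \pmod{n_1} \text{ for } j \geq 2\}$ of covolume $n_1^{d-1}$, which yields a nonzero lattice vector of $L^\infty$-norm at most $C_d\, n_1^{(d-1)/d}$. Using the freedom to choose among the $d$ maximal cones of $\Sigma_X$ containing $v_1$ (these differ by swapping one of $e_2,\ldots,e_d$ for $-(e_2+\cdots+e_d)$), together with the symmetry $k \leftrightarrow n_1 - k$, one should be able to extract a primitive $v^{(k)}$ in some such cone whose $\alpha$-coordinates are all of size $\lesssim n_1^{-1/d}$, giving $\psi_u(v^{(k)}) \leq C'(d,r)\, n_1^{-1/d}$; combined with $\psi_u(v^{(k)}) \geq \epsilon$, this forces $n_1 \leq (C'(d,r)/\epsilon)^d$. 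The main obstacle will be precisely this last extraction step: the classical Minkowski theorem only yields a two-sided bound, whereas we need a lattice vector whose coordinates all lie in the orthant describing our cone. Resolving this sign ambiguity by carefully exploiting the several maximal cones around $v_1$ and the $k \leftrightarrow n_1 - k$ symmetry is the delicate technical part of the argument.
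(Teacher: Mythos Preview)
The paper itself does not prove Theorem~\ref{t-toric-problem}; it is quoted from [\ref{B-Fano-fib}] and only its place in the proof scheme for Theorem~\ref{t-mc-sh-conj} (``Step~6: Solve the toric problem'') is indicated. So there is no in-paper argument to compare against, and I can only comment on your outline on its own terms.

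Your toric translation is correct, but one step is asserted without justification: you claim $\psi_u(v)\ge\epsilon$ for \emph{every} primitive lattice vector $v$ in a maximal cone of $\Sigma_X$, whereas the hypothesis only constrains $\psi_u$ on the rays of $\Sigma_W$. This extension does hold, and you should say why: any such $v$ lies in some maximal cone $\sigma_W=\langle w_1,\ldots,w_d\rangle$ of $\Sigma_W$, which is smooth (a lattice basis) since $W$ is a resolution; writing $v=\sum c_iw_i$ with $c_i\in\Z^{\ge0}$, $\sum c_i\ge1$, and using that $\psi_u$ is linear on the ambient cone of $\Sigma_X$, one gets $\psi_u(v)=\sum c_i\psi_u(w_i)\ge\epsilon$.

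As for your ``main obstacle'', it dissolves exactly along the lines you suggest, and the $k\leftrightarrow n_1-k$ symmetry is not even needed. Dirichlet's simultaneous approximation yields $1\le k\le n_1^{(d-1)/d}$ and integers $\ell_j$ with $|kn_j/n_1-\ell_j|\le n_1^{-1/d}$; set $w=(k,\ell_2,\ldots,\ell_d)$. The $d$ maximal cones of $\Sigma_X$ through $v_1$ cover the half-space $\{x_1\ge 0\}$ (their traces on $\{x_1=0\}$ are the fan of $\PP^{d-2}$), so $w$ lies in one of them. Writing $b_j=\ell_j-kn_j/n_1$ and $m=\min_jb_j$, the cone containing $w$ is the one omitting $e_{j_0}$ where $b_{j_0}=m$ (or $\sigma_0$ if $m\ge0$), and in that cone the barycentric coordinates of $w$ are $a=k/n_1$, $c_0=-m$, $c_j=b_j-m$, all bounded by $2(d-1)n_1^{-1/d}$. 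Hence $\psi_u(w)\le C'(d,r)\,n_1^{-1/d}$. Since $1\le k<n_1$, the primitivisation of $w$ is neither $v_1$ nor any $e_j$ nor $e_0$, so the inequality $\psi_u(w)\ge\epsilon$ applies and gives $n_1\le(C'/\epsilon)^d$.
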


Here ${\bf B}$ denotes stable base locus. The role of the term $-\psi^*rK_X$ is to control degree over $\A^1$. The theorem is a key ingredient of the results of [\ref{B-Fano-fib}].

In order to prove the theorem, we first attempted to prove a more general local variant of it. More precisely, 
suppose that $(W,B_W)$ is an $\epsilon$-lc pair of dimension $d$ (as usual $\epsilon>0$), $W\to \A^d$ is a birational contraction,  
and that $\mu_0B\le r$ where $B$ is the pushdown of $B_W$. Is it true that for any prime divisor $T$ on $W$ mapping to $0$, $\mu_TH_i$ is bounded from above for some $i$, where $H_i$ are the coordinate hyperplanes on $\A^d$? Although one can show that this is true for $d=2$ but it turns out that it is not true for $d\ge 3$. Here is a counter-example. 

\begin{exa}
Consider the vector $(n,n+1,n(n+1))$ and let $f\colon X\to \A^3$ be the corresponding weighted blowup  with exceptional divisor $T$. Let $H_1,\cdots, H_3$ be the coordinate hyperplanes on $\A^3$. Then on $X$ we have 
$$
f^*H_1=H_1^\sim+nT, \ f^*H_2=H_2^\sim+(n+1)T, \ f^*H_3=H_3^\sim+n(n+1)T
$$
where $H_i^\sim$ denotes birational transform.
From this we get 
$$
H_3^\sim\sim (n+1)H_1^\sim \ \mbox{and} \  H_3^\sim\sim nH_2^\sim
$$
over $\A^3$.
Therefore, $|H_3^\sim|$ is base point free which in particular means that $H_3^\sim$ is Cartier (and it is ample over $\A^3$). Let $r=6$. Pick a general $0\le B_X\sim_\Q rH_3^\sim$. 

 Let $W'\to X$ be a resolution of singularities. Let $B_{W'}$ be the pullback of $B_X$. Run MMP on $(W',B_{W'})$ over $X$ to get a minimal model $(W,B_W)$. By our choice of $r$ and by boundedness of length of extremal rays, $K_W+B_W$ is actually nef over $\A^3$. Moreover, $(W,B_W)$ is $\frac{1}{2}$-lc.
 
 Now let $g\colon V\to \A^3$ be the blowup of the origin, with exceptional divisor $E$, and denote $V\bir X$ by $\phi$. Then from $B_X\sim_\Q rH_3^\sim$ on $X$, we get $\phi^* B_X\sim_\Q \phi^* rH_3^\sim$ on $V$. Since $B_X$ is general, $\phi^*B_X$ is the birational transform of $B_X$. On the other hand, $g^*H_3=\phi^*H_3^\sim+eE$ where $e\le 1$. 

Let $B$ be the pushdown of $B_X$. Then $\mu_0B=(\phi^*B_X)\cdot L$ for some line $L$ in the exceptional divisor of $V\to \A^3$. From 
$$
\phi^* B_X\sim_\Q r\phi^*H_3^\sim \ \mbox{and} \ r\phi^*H_3^\sim+reE\equiv 0/\A^3,
$$ 
we deduce that 
$$
\mu_0B=(\phi^*B_X)\cdot L=r\phi^*H_3^\sim\cdot L=-reE\cdot L=re\le r.
$$ 
However, $\mu_TH_i\ge n$ where $n$ is arbitrary. Note that by construction, $T$ is not contracted over $W$, so it its birational transform is a prime divisor on $W$. 
\end{exa} 

\subsection{Geometry of numbers}

It turns out that the results stated in the toric section above have consequences for the geometry of numbers. We can translate those results into the language of integers which would be understandable to advanced high school students. 

Define
$$
\mathcal{M}_d=\{(m_1, \dots, m_d) \mid m_i\in \Z^{\ge 0}, \ \gcd(m_1,\dots,m_d)=1\}.
$$ 
By convention, $(0,\dots,0)\notin \mathcal{M}_d$. 
Fix $(n_1,\dots, n_d)\in \mathcal{M}_d$. 
Define 
$
\alpha \colon \mathcal{M}_d \to \Q
$
by writing 
$$
(m_1, \dots, m_d)=a(n_1,\dots, n_d)+\sum_{i\neq j} b_ie_i, \ \ \ \mbox{for some $1\le j\le d$},
$$ 
where $e_1, \dots, e_d$ is the standard basis of $\R^d$, and $a,b_i\ge 0$, 
and letting 
$$
\alpha(m_1,\dots, m_d)=a+\sum_{i\neq j} b_i.
$$

We can ask the question: how small can $\alpha$ get?

\begin{exa}
We list some examples.
\begin{itemize}
\item $(n_1,n_2)=(1,1) \ \implies  \ \alpha\ge 1$. 

\item $(n_1,n_2)=(1,2) \ \implies  \ \alpha\ge 1$. 

\item $(n_1,n_2)=(1,n) \ \implies  \ \alpha\ge 1$.  

\item $(n_1,n_2)=(2,3) \ \implies  \ \alpha\ge \frac{2}{3}$. 

\item $(n_1,n_2)=(n,n-1) \ \implies  \ \alpha\ge \frac{2}{n}$. 

\item $(n_1,n_2)=(n,2n-1) \ \implies  \ \alpha\ge \frac{2}{n}$. 
\end{itemize} 
\end{exa}

\begin{exa}
Here are some higher rank examples.
\begin{itemize}
\item $(n_1,n_2,n_3)=(1,1,1) \ \implies  \ \alpha\ge 1$. 

\item $(n_1,n_2,n_3)=(1,1,2) \ \implies  \ \alpha\ge 1$. 

\item $(n_1,n_2,n_3)=(1,n_2,n_3) \ \implies  \ \alpha\ge 1$. 

\item $(n_1,n_2,n_3)=(10,11,19) \ \implies  \ \alpha(1,1,2)=\frac{5}{11}$. 

\item $(n_1,\dots,n_4)=(20,57,133,210) \ \implies  \ \alpha\ge 1$. 

\item $(n_1,\dots,n_4)=(32,41,71,102) \ \implies  \ \alpha\ge 1$. 

\item In general, 
$$
\alpha\ge \frac{1}{\min\{n_1,\dots,n_d\}}.
$$ 
\end{itemize} 
\end{exa}
 
The two examples of dimension 4 were copied from [\ref{SS-weighted}, after Proposition 4.11]. 

Let $f\colon X\to \A^d$ be the weighted blowup given by the weights $(n_1,\dots,n_d)$. Then $X$ is $\epsilon$-lc iff the corresponding function $\alpha\ge \epsilon$. So the condition $\alpha\ge \epsilon$ implies that the minimum of the $n_i$ is bounded from above, by Theorem \ref{t-SS}. Thus we get a very elementary interpretation of Theorem \ref{t-SS}.

On the other hand, we can define another kind of function.
Let
$$
\mathcal{M}_d'=\{(m_1,\dots,m_d) \mid m_1\in \Z^{\ge 0}, m_i\in \Z, i\ge 2, \ \gcd(m_1,\dots,m_d)=1\}.
$$ 
By convention, $(0,\dots, 0)\notin \mathcal{M}_d'$. 
Fix $(n_1,\dots,n_d)\in \mathcal{M}_d'$. 
Define 
$
\alpha' \colon \mathcal{M}_d' \to \Q
$
by writing
$$
(m_1,\dots,m_d)=a(n_1,\dots,n_d)+\sum_{2\le i\le d} b_iu_i, \ \ \ a,b_i\ge 0,
$$ 
where 
$$
\{u_2,\dots,u_d\}\subset \{e_2,\dots,e_d, \ -(e_2+\dots+e_d)\}
$$
and letting 
$$
\alpha'(m_1,\dots,m_d)=a+\sum_{2\le i\le d} b_i.
$$

We can ask again, how small can $\alpha$ get?

\begin{exa}
We list some examples.
\begin{itemize}
\item $(n_1,n_2)=(1,1) \ \implies  \ \alpha\ge 1$. 

\item $(n_1,n_2)=(1,2) \ \implies  \ \alpha\ge 1$. 

\item $(n_1,n_2)=(1,n) \ \implies  \ \alpha\ge 1$. 

\item $(n_1,n_2)=(2,3) \ \implies  \ \alpha\ge 1$. 

\item $(n_1,n_2)=(n,1) \ \implies  \ \alpha\ge \frac{2}{n}$. 
\end{itemize} 
\end{exa}

Let $f\colon X\to \mathbb{A}^1$ be the morphism defined before \ref{t-BC-toric} using the vector $(n_1,\dots,n_d)$.
The reduced fibre $T=f^{-1}\{0\}$ is the prime divisor corresponding to $(n_1,\dots,n_d)$.
Then $X$ is $\epsilon$-lc iff $\alpha'\ge \epsilon$.
Theorem \ref{t-BC-toric} then says that if $\alpha'\ge \epsilon$, then $n_1=\mu_Tf^*0$ is bounded from above. Thus we get a very elementary interpretation of Theorem \ref{t-BC-toric}. 

What is interesting about these statements is that everything is built from the vector $(n_1,\dots,n_d)$. So the statements can be viewed as purely number-theoretic. 

It is possible to define more functions of the type above and get similar boundedness results. Moreover, Theorem \ref{t-toric-problem} can also be translated into a number-theoretic statement (although a long statement) because again everything in the statement is built from the given vector $(n_1,\dots,n_d)$.

\subsection{Some open problems}

There are many interesting open problems regarding positivity, singularities, and boundedness. Here we mention a few.

\begin{conj}[Birkar-Shokurov]
For $d\in \N, \epsilon \in \R^{>0}$, there is $t \in \R^{>0}$ such that if 
\begin{itemize}
\item $(X,B)$ is $\epsilon$-lc of dimension $d$, and 
\item $f\colon X\to Z$ is a contraction,
\item $-(K_X+B)$ is ample over $Z$,
\end{itemize}
then for each closed point $z\in Z$ there is a Cartier divisor $D\ge 0$ containing $z$ such that 
$$
(X,B+tf^*D) 
$$ 
is log canonical.
\end{conj}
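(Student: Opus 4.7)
The plan is to adapt the six-step program outlined before the proof of Theorem~\ref{t-mc-sh-conj} (and reused for Theorem~\ref{t-bnd-mult-lc-places-fib-main}) to the present Fano-type setting, transporting the problem from $X$ down to $Z$ via the canonical bundle formula, solving it on $Z$, and pulling back.

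First, since $-(K_X+B)$ is ample over $Z$, the morphism $f$ is a Fano-type fibration, so the theory of bounded complements for Fano-type fibrations ([\ref{B-BAB}]) gives, after shrinking $Z$ around $z$, an integer $n$ depending only on $d,\epsilon$ and a boundary $\Lambda\ge B$ with $(X,\Lambda)$ lc and $n(K_X+\Lambda)\sim 0/Z$. The canonical bundle formula then yields a generalised pair $(Z,B_Z+M_Z)$ with
$$
K_X+\Lambda\sim_\Q f^*(K_Z+B_Z+M_Z),
$$
which is generalised lc. The key intermediate claim is that $(Z,B_Z+M_Z)$ is in fact generalised $\delta$-lc at $z$ for some $\delta=\delta(d,\epsilon)>0$. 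This is the analogue of Theorem~\ref{t-mc-sh-conj} in the present setting; I would prove it by rerunning the six-step program of [\ref{B-Fano-fib}]: the complement Step~1 is already in hand; Step~2 uses BAB on the general fibres; Steps~3--4 build a relatively bounded toroidal and then local toric model near $z$; Steps~5--6 reduce to and solve Theorem~\ref{t-toric-problem}, using the $\epsilon$-lc-ness of the sub-boundary $B$ rather than only the lc-ness of $\Lambda$.

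Granted the $\delta$-lc claim, apply complement theory for generalised pairs to $(Z,B_Z+M_Z)$ at $z$ to produce a bounded integer $N=N(d,\epsilon)$ and an effective $\Gamma_Z\ge 0$ with $z\in\Supp\Gamma_Z$ such that $(Z,B_Z+M_Z+\Gamma_Z)$ is generalised lc and $N(K_Z+B_Z+M_Z+\Gamma_Z)\sim 0$ near $z$. Since $Z$ is klt at $z$, pass to a small $\Q$-factorial modification; a suitable component of $\Gamma_Z$ through $z$, rescaled by a universal factor, then pushes forward to a Cartier divisor $D\ge 0$ on $Z$ containing $z$ with $(Z,B_Z+M_Z+tD)$ generalised lc for some universal $t=t(d,\epsilon)>0$. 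By inversion of adjunction for the canonical bundle formula, $(X,\Lambda+tf^*D)$ is lc, and since $B\le\Lambda$ we conclude that $(X,B+tf^*D)$ is lc, as required.

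The main obstacle is the intermediate $\delta$-lc claim on the base. Theorem~\ref{t-mc-sh-conj} starts from an $\epsilon$-lc Calabi--Yau fibration with $K_X+B\sim_\Q 0/Z$, whereas here we only have $K_X+\Lambda\sim_\Q 0/Z$ with $(X,\Lambda)$ merely lc, and must still extract a universal $\delta$ from the hidden $\epsilon$-lc-ness of the smaller boundary $B$. Adapting the toric reduction Theorem~\ref{t-toric-problem} to this setting requires a careful analysis of horizontal lc centres of $(X,\Lambda)$ that are simply absent in the Calabi--Yau setup: they must be shown not to contribute to the vertical toric multiplicity bound that outputs $\delta$. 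A secondary difficulty is producing a genuine Cartier (not merely $\Q$-Cartier or integral) divisor $D$ through $z$; this relies on the $\Q$-factoriality of the small modification together with a uniform bound on the local Cartier index of $K_Z+B_Z+M_Z$ on $\delta$-lc generalised klt pairs.
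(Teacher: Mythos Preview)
The statement you are attempting to prove is listed in the paper as an \emph{open conjecture} (in the section ``Some open problems''); there is no proof in the paper to compare against. Your proposal is thus an attempted resolution of an open problem, and as written it does not succeed.

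You yourself locate the main gap: the intermediate $\delta$-lc claim on the base. Theorem~\ref{t-mc-sh-conj} needs an $\epsilon$-lc pair $(X,B)$ with $K_X+B\sim_\Q 0/Z$, whereas the complement $\Lambda\ge B$ you construct is only lc. Saying you will ``rerun the six-step program using the $\epsilon$-lc-ness of the smaller boundary $B$'' is not a reduction; it is a restatement of what has to be proved. The horizontal lc centres of $(X,\Lambda)$ that you mention are exactly the obstruction, and nothing in the outline explains how to neutralise them.

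There is also a structural issue you do not address. The conjecture is posed for an arbitrary contraction $f\colon X\to Z$, and the paper stresses that the birational case $\dim X=\dim Z$ is where ``there are still many problems''. In that case the general fibre is a point, so Step~2 (BAB on general fibres) and Steps~3--5 (relatively bounded toroidal and toric models, reduction to Theorem~\ref{t-toric-problem}) of the six-step program have no content; there is no nontrivial canonical bundle formula, and your ``$\delta$-lc claim on $Z$'' becomes the assertion that the crepant pushdown $(Z,\Lambda_Z)$ of an lc complement is $\delta$-lc, which is false in general. So the outline does not even engage with the case the conjecture is principally aimed at. The secondary difficulty you flag---upgrading a bounded complement on $Z$ to a genuinely Cartier divisor $D$ through $z$---is likewise unresolved and would require a local Cartier-index bound that is not currently available.
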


This is closely related to:

\begin{conj}[Shokurov]
For $d\in \N, \epsilon \in \R^{>0}$, there is $n \in \N$ such that if 
\begin{itemize}
\item $X$ is $\epsilon$-lc of dimension $d$,  
\item $f\colon X\to Z$ is a contraction, and 
\item $-K_X$ is ample over $Z$,
\end{itemize}
then for each closed point $z\in Z$, there is a klt $n$-complement $K_X+B$ over $z$.
\end{conj}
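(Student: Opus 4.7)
The plan is to combine the relative version of Birkar's theorem on boundedness of complements with a perturbation that exploits the $\epsilon$-lc hypothesis. Applying [\ref{B-compl}] to the Fano-type fibration $f\colon X\to Z$ at the closed point $z$, one obtains an integer $n_{0}\in\N$ depending only on $d$ and an effective $\Q$-divisor $\Lambda$ on $X$ with $n_{0}\Lambda$ integral, $(X,\Lambda)$ lc in a neighbourhood of $f^{-1}(z)$, and $n_{0}(K_{X}+\Lambda)\sim 0$ over $z$. This yields an \emph{lc} $n_{0}$-complement; as a byproduct, $-n_{0}K_{X}$ is Cartier in a neighbourhood of $f^{-1}(z)$.

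The key observation for upgrading from lc to klt is that since $X$ is $\epsilon$-lc, for every $t\in(0,1)$ the pair $(X,t\Lambda)$ is klt. Indeed, for any divisor $E$ over $X$,
$$
a(E,X,t\Lambda)=a(E,X)-t\mu_{E}\Lambda\ge (1-t)\,a(E,X)>0,
$$
using $\mu_{E}\Lambda\le a(E,X)$ from lc-ness of $(X,\Lambda)$ and $a(E,X)>0$ from klt-ness of $X$. Of course $t\Lambda$ itself is not a complement, so I would compensate by adding $(1-t)\Delta$, where $\Delta$ is an effective $\Q$-divisor satisfying $\Delta\sim_{\Q}-K_{X}$ over $z$, $(X,\Delta)$ klt near $f^{-1}(z)$, and with $\Supp\Delta$ in sufficiently general position with respect to $\Lambda$. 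Then $B:=t\Lambda+(1-t)\Delta$ satisfies $K_{X}+B\sim_{\Q}0$ over $z$ and $(X,B)$ is klt; for $t\in(0,1)\cap\Q$ of denominator $q$ and $N\Delta$ integral, the required complement index is $n:=\lcm(qn_{0},qN)$.

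The main obstacle is constructing such a $\Delta$ with Cartier index $N$ bounded purely in terms of $d$ and $\epsilon$. This amounts to an effective relative base-point-freeness statement for $-K_{X}$ on $\epsilon$-lc Fano fibrations near $f^{-1}(z)$, enabling a general-member Bertini argument. One natural route is to first show that, after a small $\Q$-factorial modification, the germ of $X$ near $f^{-1}(z)$ lies in a bounded family; this could be approached via Theorem \ref{t-mc-sh-conj} combined with BAB applied to general fibres of $f$, or via a direct local boundedness argument in the spirit of [\ref{B-Fano-fib}]. An alternative route is to first produce a klt $\R$-complement---using ampleness of $-K_{X}$ over $Z$ to adjoin a small general $\R$-divisor $\sim_{\R}-K_{X}$ to $t\Lambda$---and then invoke ACC for log canonical thresholds [\ref{HMX-acc}] in conjunction with the boundedness of $n_{0}$ to pass from $\R$-coefficients to $\Q$-coefficients with denominators controlled only by $d$ and $\epsilon$.
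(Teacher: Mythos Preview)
The statement you are attempting to prove is listed in the paper as an \emph{open conjecture} in the section on open problems; the paper does not offer a proof, so there is nothing to compare your argument against on that front. Your own write-up is honest about this: you explicitly flag ``the main obstacle'' and then sketch possible routes without actually carrying any of them through. So what you have is a strategy, not a proof.

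On the strategy itself, the gap is real and your proposed fixes are, as stated, circular or insufficient. Your first route, effective relative base-point-freeness for $-K_X$ with index bounded in terms of $d,\epsilon$, is essentially a restatement of the conjecture: a general member of such a free system \emph{is} a klt complement of bounded index. Your second route, bounding the germ of $X$ near $f^{-1}(z)$ via Theorem~\ref{t-mc-sh-conj} and BAB, does not work as written: BAB bounds the general fibres and Theorem~\ref{t-mc-sh-conj} bounds the singularities of the base, but neither bounds the total space near a special fibre, which is exactly what you would need. Your third route, producing a klt $\R$-complement and then invoking ACC to force bounded denominators, also does not go through: ACC for lc thresholds constrains accumulation of thresholds, not Cartier indices, and there is no known mechanism to pass from an arbitrary klt $\R$-complement to one with coefficients in a fixed finite set depending only on $d,\epsilon$. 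A minor side remark: the claim that ``$-n_0K_X$ is Cartier'' does not follow from $n_0(K_X+\Lambda)\sim 0$ over $z$, since $n_0\Lambda$ is only integral as a Weil divisor; but this is not what your argument hinges on.
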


Theorem \ref{t-bnd-mult-lc-places-fib-main} looks technical partially because it carries extra information (e.g. $\Lambda$) originating in complement theory which is used to reduce the theorem to a toric problem. What if one removes this extra structure? To be more precise: 

\begin{quest}
	Let $d,r\in \mathbb{N}$ and let $\epsilon \in \mathbb{R}^{>0}$.
Then is there $l\in \N$ depending only on $d,r,\epsilon$ satisfying the 
following? Assume that 
\begin{itemize}

\item $(W,B_W)$ is an $\epsilon$-lc pair of dimension $d$,

\item $W\to X$ is a birational contraction and $X\to Z$ is a contraction onto a smooth curve,

\item $K_W+B_W$ is nef$/X$ (or nef $/Z$), 

\item  $A$ is a very ample$/Z$ divisor on $X$ such that $\deg_{A/Z}A\le r$,

\item $A-B$ is pseudo-effective$/Z$ where $B$ is the pushdown of $B_W$, and 

\item $T$ is a prime divisor on $X$ mapping to a closed point $z\in Z$. 
\end{itemize}
Then $\mu_TF\le l$ where $F$ is the fibre of $X\to Z$ over $z$.
\end{quest} 

Here is another variant: 

\begin{quest}
	Let $d,r\in \mathbb{N}$ and let $\epsilon \in \mathbb{R}^{>0}$. Is the following true?
 Assume that 
\begin{itemize}

\item $(W,B_W)$ is a projective $\epsilon$-lc pair of dimension $d$,

\item $W\to X$ is a birational contraction,

\item $K_W+B_W$ is nef$/X$, 

\item  $A$ is a very ample divisor on $X$ such that $\deg_{A}A\le r$,

\item $A-B$ is pseudo-effective where $B$ is the pushdown of $B_W$. 
\end{itemize}
Then such $X$ form a bounded family.
\end{quest} 

In case $K_W+B_W$ is numerically trivial over $X$, then $(W,B_W)$ is a crepant model of $(X,B)$. Boundedness of such models was established in [\ref{B-FT-fib}].

\bigskip
\emph{Acknowledgements.} This work was supported by a grant of Tsinghua University
and a grant of the National Program of Overseas High Level Talent.


\vspace{2cm}

\small
\textsc{Yau Mathematical Sciences Center, JingZhai, Tsinghua University, Hai Dian District, Beijing, China 100084  } \endgraf
\vspace{0.5cm}
\email{Email: birkar@tsinghua.edu.cn\\}

\end{document}